  \newfont\fiverm{cmr5} 
  \let\fiverm\fivrm
\def\@picture(#1,#2)(#3,#4){%
  \@picht #2\unitlength
  \setbox\@picbox\hbox to #1\unitlength\bgroup 
  \let\endpicture=\!latexendpicture
  \let\frame=\!latexframe
  \let\linethickness=\!latexlinethickness
  \let\multiput=\!latexmultiput
  \let\put=\!latexput
  \hskip -#3\unitlength \lower #4\unitlength \hbox\bgroup}
\font\fiverm=cmr5
\def\PiC{P\kern-.12em\lower.5ex\hbox{I}\kern-.075emC}
\def\PiCTeX{\PiC\kern-.11em\TeX}
\def\!ifnextchar#1#2#3{%
  \let\!testchar=#1%
  \def\!first{#2}%
  \def\!second{#3}%
  \futurelet\!nextchar\!testnext}
\def\!testnext{%
  \ifx \!nextchar \!spacetoken 
    \let\!next=\!skipspacetestagain
  \else
    \ifx \!nextchar \!testchar
      \let\!next=\!first
    \else 
      \let\!next=\!second 
    \fi 
  \fi
  \!next}
\def\\{\!skipspacetestagain} 
\def\\ {\futurelet\!nextchar\!testnext} 
\def\\{\let\!spacetoken= } \\  
\def\!tfor#1:=#2\do#3{%
  \edef\!fortemp{#2}%
  \ifx\!fortemp\!empty 
    \else
    \!tforloop#2\!nil\!nil\!!#1{#3}%
  \fi}
\def\!tforloop#1#2\!!#3#4{%
  \def#3{#1}%
  \ifx #3\!nnil
    \let\!nextwhile=\!fornoop
  \else
    #4\relax
    \let\!nextwhile=\!tforloop
  \fi 
  \!nextwhile#2\!!#3{#4}}
\def\!etfor#1:=#2\do#3{%
  \def\!!tfor{\!tfor#1:=}%
  \edef\!!!tfor{#2}%
  \expandafter\!!tfor\!!!tfor\do{#3}}
\def\!cfor#1:=#2\do#3{%
  \edef\!fortemp{#2}%
  \ifx\!fortemp\!empty 
  \else
    \!cforloop#2,\!nil,\!nil\!!#1{#3}%
  \fi}
\def\!cforloop#1,#2\!!#3#4{%
  \def#3{#1}%
  \ifx #3\!nnil
    \let\!nextwhile=\!fornoop 
  \else
    #4\relax
    \let\!nextwhile=\!cforloop
  \fi
  \!nextwhile#2\!!#3{#4}}
\def\!ecfor#1:=#2\do#3{%
  \def\!!cfor{\!cfor#1:=}%
  \edef\!!!cfor{#2}%
  \expandafter\!!cfor\!!!cfor\do{#3}}
\def\!empty{}
\def\!nnil{\!nil}
\def\!fornoop#1\!!#2#3{}
\def\!ifempty#1#2#3{%
  \edef\!emptyarg{#1}%
  \ifx\!emptyarg\!empty
    #2%
  \else
    #3%
  \fi}
\def\!getnext#1\from#2{%
  \expandafter\!gnext#2\!#1#2}%
\def\!gnext\\#1#2\!#3#4{%
  \def#3{#1}%
  \def#4{#2\\{#1}}%
  \ignorespaces}
\def\!getnextvalueof#1\from#2{%
  \expandafter\!gnextv#2\!#1#2}%
\def\!gnextv\\#1#2\!#3#4{%
  #3=#1%
  \def#4{#2\\{#1}}%
  \ignorespaces}
\def\!copylist#1\to#2{%
  \expandafter\!!copylist#1\!#2}
\def\!!copylist#1\!#2{%
  \def#2{#1}\ignorespaces}
\def\!wlet#1=#2{%
  \let#1=#2 
  \wlog{\string#1=\string#2}}
\def\!listaddon#1#2{%
  \expandafter\!!listaddon#2\!{#1}#2}
\def\!!listaddon#1\!#2#3{%
  \def#3{#1\\#2}}
\def\!rightappend#1\withCS#2\to#3{\expandafter\!!rightappend#3\!#2{#1}#3}
\def\!!rightappend#1\!#2#3#4{\def#4{#1#2{#3}}}
\def\!leftappend#1\withCS#2\to#3{\expandafter\!!leftappend#3\!#2{#1}#3}
\def\!!leftappend#1\!#2#3#4{\def#4{#2{#3}#1}}
\def\!lop#1\to#2{\expandafter\!!lop#1\!#1#2}
\def\!!lop\\#1#2\!#3#4{\def#4{#1}\def#3{#2}}
\def\!loop#1\repeat{\def\!body{#1}\!iterate}
\def\!iterate{\!body\let\!next=\!iterate\else\let\!next=\relax\fi\!next}
\def\!!loop#1\repeat{\def\!!body{#1}\!!iterate}
\def\!!iterate{\!!body\let\!!next=\!!iterate\else\let\!!next=\relax\fi\!!next}
\def\!removept#1#2{\edef#2{\expandafter\!!removePT\the#1}}
{\catcode`p=12 \catcode`t=12 \gdef\!!removePT#1pt{#1}}
\def\placevalueinpts of <#1> in #2 {%
  \!removept{#1}{#2}}
\def\!mlap#1{\hbox to 0pt{\hss#1\hss}}
\def\!vmlap#1{\vbox to 0pt{\vss#1\vss}}
\def\!not#1{%
  #1\relax
    \!switchfalse
  \else
    \!switchtrue
  \fi
  \if!switch
  \ignorespaces}
\def\wlog#1{}    
\newdimen\headingtoplotskip     
\newdimen\linethickness         
\newdimen\longticklength        
\newdimen\plotsymbolspacing     
\newdimen\shortticklength       
\newdimen\stackleading          
\newdimen\tickstovaluesleading  
\newdimen\totalarclength        
\newdimen\valuestolabelleading  
\newbox\!boxA                   
\newbox\!boxB                   
\newbox\!picbox                 
\newbox\!plotsymbol             
\newbox\!putobject              
\newbox\!shadesymbol            
\newdimen\!Xleft                
\newdimen\!Xright               
\newdimen\!Xsave                
\newdimen\!Ybot                 
\newdimen\!Ysave                
\newdimen\!Ytop                 
\newdimen\!angle                
\newdimen\!arclength            
\newdimen\!areabloc             
\newdimen\!arealloc             
\newdimen\!arearloc             
\newdimen\!areatloc             
\newdimen\!bshrinkage           
\newdimen\!checkbot             
\newdimen\!checkleft            
\newdimen\!checkright           
\newdimen\!checktop             
\newdimen\!dimenA               
\newdimen\!dimenB               
\newdimen\!dimenC               
\newdimen\!dimenD               
\newdimen\!dimenE               
\newdimen\!dimenF               
\newdimen\!dimenG               
\newdimen\!dimenH               
\newdimen\!dimenI               
\newdimen\!distacross           
\newdimen\!downlength           
\newdimen\!dp                   
\newdimen\!dshade               
\newdimen\!dxpos                
\newdimen\!dxprime              
\newdimen\!dypos                
\newdimen\!dyprime              
\newdimen\!ht                   
\newdimen\!leaderlength         
\newdimen\!lshrinkage           
\newdimen\!midarclength         
\newdimen\!offset               
\newdimen\!plotheadingoffset    
\newdimen\!plotsymbolxshift     
\newdimen\!plotsymbolyshift     
\newdimen\!plotxorigin          
\newdimen\!plotyorigin          
\newdimen\!rootten              
\newdimen\!rshrinkage           
\newdimen\!shadesymbolxshift    
\newdimen\!shadesymbolyshift    
\newdimen\!tenAa                
\newdimen\!tenAc                
\newdimen\!tenAe                
\newdimen\!tshrinkage           
\newdimen\!uplength             
\newdimen\!wd                   
\newdimen\!wmax                 
\newdimen\!wmin                 
\newdimen\!xB                   
\newdimen\!xC                   
\newdimen\!xE                   
\newdimen\!xM                   
\newdimen\!xS                   
\newdimen\!xaxislength          
\newdimen\!xdiff                
\newdimen\!xleft                
\newdimen\!xloc                 
\newdimen\!xorigin              
\newdimen\!xpivot               
\newdimen\!xpos                 
\newdimen\!xprime               
\newdimen\!xright               
\newdimen\!xshade               
\newdimen\!xshift               
\newdimen\!xtemp                
\newdimen\!xunit                
\newdimen\!xxE                  
\newdimen\!xxM                  
\newdimen\!xxS                  
\newdimen\!xxloc                
\newdimen\!yB                   
\newdimen\!yC                   
\newdimen\!yE                   
\newdimen\!yM                   
\newdimen\!yS                   
\newdimen\!yaxislength          
\newdimen\!ybot                 
\newdimen\!ydiff                
\newdimen\!yloc                 
\newdimen\!yorigin              
\newdimen\!ypivot               
\newdimen\!ypos                 
\newdimen\!yprime               
\newdimen\!yshade               
\newdimen\!yshift               
\newdimen\!ytemp                
\newdimen\!ytop                 
\newdimen\!yunit                
\newdimen\!yyE                  
\newdimen\!yyM                  
\newdimen\!yyS                  
\newdimen\!yyloc                
\newdimen\!zpt                  
\newif\if!axisvisible           
\newif\if!gridlinestoo          
\newif\if!keepPO                
\newif\if!placeaxislabel        
\newif\if!switch                
\newif\if!xswitch               
\newtoks\!axisLaBeL             
\newtoks\!keywordtoks           
\newwrite\!replotfile           
\def\!cosrotationangle{1}      
\def\!sinrotationangle{0}      
\def\!xpivotcoord{0}           
\def\!xref{0}                  
\def\!xshadesave{0}            
\def\!ypivotcoord{0}           
\def\!yref{0}                  
\def\!yshadesave{0}            
\def\!zero{0}                  
\let\wlog=\!!!wlog
\def\normalgraphs{%
  \longticklength=.4\baselineskip
  \shortticklength=.25\baselineskip
  \tickstovaluesleading=.25\baselineskip
  \valuestolabelleading=.8\baselineskip
  \linethickness=.4pt
  \stackleading=.17\baselineskip
  \headingtoplotskip=1.5\baselineskip
  \visibleaxes
  \ticksout
  \nogridlines
  \unloggedticks}
\def\setplotarea x from #1 to #2, y from #3 to #4 {%
  \!arealloc=\!M{#1}\!xunit \advance \!arealloc -\!xorigin
  \!areabloc=\!M{#3}\!yunit \advance \!areabloc -\!yorigin
  \!arearloc=\!M{#2}\!xunit \advance \!arearloc -\!xorigin
  \!areatloc=\!M{#4}\!yunit \advance \!areatloc -\!yorigin
  \!initinboundscheck
  \!xaxislength=\!arearloc  \advance\!xaxislength -\!arealloc
  \!yaxislength=\!areatloc  \advance\!yaxislength -\!areabloc
  \!plotheadingoffset=\!zpt
  \!dimenput {{\setbox0=\hbox{}\wd0=\!xaxislength\ht0=\!yaxislength\box0}}
     [bl] (\!arealloc,\!areabloc)}
\def\visibleaxes{%
  \def\!axisvisibility{\!axisvisibletrue}}
\def\!fixkeyword#1{%
  \errhelp=\!keywordhelp
  \errmessage{Unrecognized keyword `#1': \the\!keywordtoks{NEW KEYWORD}'}}
\def\fixkeyword#1{%
  \!nextkeyword#1 }
\def\axis {%
  \def\!nextkeyword##1 {%
    \expandafter\ifx\csname !axis##1\endcsname \relax
      \def\!next{\!fixkeyword{##1}}%
    \else
      \def\!next{\csname !axis##1\endcsname}%
    \fi
    \!next}%
  \!offset=\!zpt
  \!axisvisibility
  \!placeaxislabelfalse
  \!nextkeyword}
\def\!axisbottom{%
  \!axisylevel=\!areabloc
  \def\!tickxsign{0}%
  \def\!tickysign{-}%
  \def\!axissetup{\!axisxsetup}%
  \def\!axislabeltbrl{t}%
  \!nextkeyword}
\def\!axistop{%
  \!axisylevel=\!areatloc
  \def\!tickxsign{0}%
  \def\!tickysign{+}%
  \def\!axissetup{\!axisxsetup}%
  \def\!axislabeltbrl{b}%
  \!nextkeyword}
\def\!axisleft{%
  \!axisxlevel=\!arealloc
  \def\!tickxsign{-}%
  \def\!tickysign{0}%
  \def\!axissetup{\!axisysetup}%
  \def\!axislabeltbrl{r}%
  \!nextkeyword}
\def\!axisright{%
  \!axisxlevel=\!arearloc
  \def\!tickxsign{+}%
  \def\!tickysign{0}%
  \def\!axissetup{\!axisysetup}%
  \def\!axislabeltbrl{l}%
  \!nextkeyword}
\def\!axisshiftedto#1=#2 {%
  \if 0\!tickxsign
    \!axisylevel=\!M{#2}\!yunit
    \advance\!axisylevel -\!yorigin
  \else
    \!axisxlevel=\!M{#2}\!xunit
    \advance\!axisxlevel -\!xorigin
  \fi
  \!nextkeyword}
\def\!axisvisible{%
  \!axisvisibletrue  
  \!nextkeyword}
\def\!axisinvisible{%
  \!axisvisiblefalse
  \!nextkeyword}
\def\!axislabel#1 {%
  \!axisLaBeL={#1}%
  \!placeaxislabeltrue
  \!nextkeyword}
\def\csname !axis/\endcsname{%
  \!axissetup 
  \if!placeaxislabel
    \!placeaxislabel
  \fi
  \if +\!tickysign 
    \!dimenA=\!axisylevel
    \advance\!dimenA \!offset 
    \advance\!dimenA -\!areatloc 
    \ifdim \!dimenA>\!plotheadingoffset
      \!plotheadingoffset=\!dimenA 
    \fi
  \fi}
\def\grid #1 #2 {%
  \!countA=#1\advance\!countA 1
  \axis bottom invisible ticks length <\!zpt> andacross quantity {\!countA} /
  \!countA=#2\advance\!countA 1
  \axis left   invisible ticks length <\!zpt> andacross quantity {\!countA} / }
\def\plotheading#1 {%
  \advance\!plotheadingoffset \headingtoplotskip
  \!dimenput {#1} [B] <.5\!xaxislength,\!plotheadingoffset>
    (\!arealloc,\!areatloc)}
\def\!axisxsetup{%
  \!axisxlevel=\!arealloc
  \!axisstart=\!arealloc
  \!axisend=\!arearloc
  \!axisLength=\!xaxislength
  \!!origin=\!xorigin
  \!!unit=\!xunit
  \!xswitchtrue
  \if!axisvisible 
    \!makeaxis
  \fi}
\def\!axisysetup{%
  \!axisylevel=\!areabloc
  \!axisstart=\!areabloc
  \!axisend=\!areatloc
  \!axisLength=\!yaxislength
  \!!origin=\!yorigin
  \!!unit=\!yunit
  \!xswitchfalse
  \if!axisvisible
    \!makeaxis
  \fi}
\def\!makeaxis{%
  \setbox\!boxA=\hbox{
    \beginpicture
      \!setdimenmode
      \setcoordinatesystem point at {\!zpt} {\!zpt}   
      \putrule from {\!zpt} {\!zpt} to
        {\!tickysign\!tickysign\!axisLength} 
        {\!tickxsign\!tickxsign\!axisLength}
    \endpicturesave <\!Xsave,\!Ysave>}%
    \wd\!boxA=\!zpt
    \!placetick\!axisstart}
\def\!placeaxislabel{%
  \advance\!offset \valuestolabelleading
  \if!xswitch
    \!dimenput {\the\!axisLaBeL} [\!axislabeltbrl]
      <.5\!axisLength,\!tickysign\!offset> (\!axisxlevel,\!axisylevel)
    \advance\!offset \!dp  
    \advance\!offset \!ht  
  \else
    \!dimenput {\the\!axisLaBeL} [\!axislabeltbrl]
      <\!tickxsign\!offset,.5\!axisLength> (\!axisxlevel,\!axisylevel)
  \fi
  \!axisLaBeL={}}
\def\arrow <#1> [#2,#3]{%
  \!ifnextchar<{\!arrow{#1}{#2}{#3}}{\!arrow{#1}{#2}{#3}<\!zpt,\!zpt> }}
\def\!arrow#1#2#3<#4,#5> from #6 #7 to #8 #9 {%
%
  \!xloc=\!M{#8}\!xunit   
  \!yloc=\!M{#9}\!yunit
  \!dxpos=\!xloc  \!dimenA=\!M{#6}\!xunit  \advance \!dxpos -\!dimenA
  \!dypos=\!yloc  \!dimenA=\!M{#7}\!yunit  \advance \!dypos -\!dimenA
  \let\!MAH=\!M
  \!setdimenmode
  \!xshift=#4\relax  \!yshift=#5\relax
  \!reverserotateonly\!xshift\!yshift
  \advance\!xshift\!xloc  \advance\!yshift\!yloc
%
  \!xS=-\!dxpos  \advance\!xS\!xshift
  \!yS=-\!dypos  \advance\!yS\!yshift
  \!start (\!xS,\!yS)
  \!ljoin (\!xshift,\!yshift)
%
  \!Pythag\!dxpos\!dypos\!arclength
  \!divide\!dxpos\!arclength\!dxpos  
  \!dxpos=32\!dxpos  \!removept\!dxpos\!!cos
  \!divide\!dypos\!arclength\!dypos  
  \!dypos=32\!dypos  \!removept\!dypos\!!sin
%
  \!halfhead{#1}{#2}{#3}
  \!halfhead{#1}{-#2}{-#3}
  \let\!M=\!MAH
  \ignorespaces}
  \def\!halfhead#1#2#3{%
    \!dimenC=-#1%
    \divide \!dimenC 2 
    \!dimenD=#2\!dimenC
    \!rotate(\!dimenC,\!dimenD)by(\!!cos,\!!sin)to(\!xM,\!yM)
    \!dimenC=-#1
    \!dimenD=#3\!dimenC
    \!dimenD=.5\!dimenD
    \!rotate(\!dimenC,\!dimenD)by(\!!cos,\!!sin)to(\!xE,\!yE)
    \!start (\!xshift,\!yshift)
    \advance\!xM\!xshift  \advance\!yM\!yshift
    \advance\!xE\!xshift  \advance\!yE\!yshift
    \!qjoin (\!xM,\!yM) (\!xE,\!yE) 
    \ignorespaces}
\def\betweenarrows #1#2 from #3 #4 to #5 #6 {%
  \!xloc=\!M{#3}\!xunit  \!xxloc=\!M{#5}\!xunit%
  \!yloc=\!M{#4}\!yunit  \!yyloc=\!M{#6}\!yunit%
  \!dxpos=\!xxloc  \advance\!dxpos by -\!xloc
  \!dypos=\!yyloc  \advance\!dypos by -\!yloc
  \advance\!xloc .5\!dxpos
  \advance\!yloc .5\!dypos
  \let\!MBA=\!M
  \!setdimenmode
  \ifdim\!dypos=\!zpt
    \ifdim\!dxpos<\!zpt \!dxpos=-\!dxpos \fi
    \put {\!lrarrows{\!dxpos}{#1}}#2{} at {\!xloc} {\!yloc}
  \else
    \ifdim\!dxpos=\!zpt
      \ifdim\!dypos<\!zpt \!dypos=-\!zpt \fi
      \put {\!udarrows{\!dypos}{#1}}#2{} at {\!xloc} {\!yloc}
    \fi
  \fi
  \let\!M=\!MBA
  \ignorespaces}
\def\!lrarrows#1#2{
  {\setbox\!boxA=\hbox{$\mkern-2mu\mathord-\mkern-2mu$}%
   \setbox\!boxB=\hbox{$\leftarrow$}\!dimenE=\ht\!boxB
   \setbox\!boxB=\hbox{}\ht\!boxB=2\!dimenE
   \hbox to #1{$\mathord\leftarrow\mkern-6mu
     \cleaders\copy\!boxA\hfil
     \mkern-6mu\mathord-$%
     \kern.4em $\vcenter{\box\!boxB}$$\vcenter{\hbox{#2}}$\kern.4em
     $\mathord-\mkern-6mu
     \cleaders\copy\!boxA\hfil
     \mkern-6mu\mathord\rightarrow$}}}
\def\!udarrows#1#2{
  {\setbox\!boxB=\hbox{#2}%
   \setbox\!boxA=\hbox to \wd\!boxB{\hss$\vert$\hss}%
   \!dimenE=\ht\!boxA \advance\!dimenE \dp\!boxA \divide\!dimenE 2
   \vbox to #1{\offinterlineskip
      \vskip .05556\!dimenE
      \hbox to \wd\!boxB{\hss$\mkern.4mu\uparrow$\hss}\vskip-\!dimenE
      \cleaders\copy\!boxA\vfil
      \vskip-\!dimenE\copy\!boxA
      \vskip\!dimenE\copy\!boxB\vskip.4em
      \copy\!boxA\vskip-\!dimenE
      \cleaders\copy\!boxA\vfil
      \vskip-\!dimenE \hbox to \wd\!boxB{\hss$\mkern.4mu\downarrow$\hss}
      \vskip .05556\!dimenE}}}
\def\putbar#1breadth <#2> from #3 #4 to #5 #6 {%
  \!xloc=\!M{#3}\!xunit  \!xxloc=\!M{#5}\!xunit%
  \!yloc=\!M{#4}\!yunit  \!yyloc=\!M{#6}\!yunit%
  \!dypos=\!yyloc  \advance\!dypos by -\!yloc
  \!dimenI=#2  
  \ifdim \!dimenI=\!zpt 
    \putrule#1from {#3} {#4} to {#5} {#6} 
  \else 
    \let\!MBar=\!M
    \!setdimenmode 
    \divide\!dimenI 2
    \ifdim \!dypos=\!zpt             
      \advance \!yloc -\!dimenI 
      \advance \!yyloc \!dimenI
    \else
      \advance \!xloc -\!dimenI 
      \advance \!xxloc \!dimenI
    \fi
    \putrectangle#1corners at {\!xloc} {\!yloc} and {\!xxloc} {\!yyloc}
    \let\!M=\!MBar 
  \fi
  \ignorespaces}
\def\setbars#1breadth <#2> baseline at #3 = #4 {%
  \edef\!barshift{#1}%
  \edef\!barbreadth{#2}%
  \edef\!barorientation{#3}%
  \edef\!barbaseline{#4}%
  \def\!bardobaselabel{\!bardoendlabel}%
  \def\!bardoendlabel{\!barfinish}%
  \let\!drawcurve=\!barcurve
  \!setbars}
\def\!setbars{%
  \futurelet\!nextchar\!!setbars}
\def\!!setbars{%
  \if b\!nextchar
    \def\!!!setbars{\!setbarsbget}%
  \else 
    \if e\!nextchar
      \def\!!!setbars{\!setbarseget}%
    \else
      \def\!!!setbars{\relax}%
    \fi
  \fi
  \!!!setbars}
\def\!setbarsbget baselabels (#1) {%
  \def\!barbaselabelorientation{#1}%
  \def\!bardobaselabel{\!!bardobaselabel}%
  \!setbars}
\def\!setbarseget endlabels (#1) {%
  \edef\!barendlabelorientation{#1}%
  \def\!bardoendlabel{\!!bardoendlabel}%
  \!setbars}
\def\!barcurve #1 #2 {%
  \if y\!barorientation
    \def\!basexarg{#1}%
    \def\!baseyarg{\!barbaseline}%
  \else
    \def\!basexarg{\!barbaseline}%
    \def\!baseyarg{#2}%
  \fi
  \expandafter\putbar\!barshift breadth <\!barbreadth> from {\!basexarg}
    {\!baseyarg} to {#1} {#2}
  \def\!endxarg{#1}%
  \def\!endyarg{#2}%
  \!bardobaselabel}
\def\!!bardobaselabel "#1" {%
  \put {#1}\!barbaselabelorientation{} at {\!basexarg} {\!baseyarg}
  \!bardoendlabel}
\def\!!bardoendlabel "#1" {%
  \put {#1}\!barendlabelorientation{} at {\!endxarg} {\!endyarg}
  \!barfinish}
\def\!barfinish{%
  \!ifnextchar/{\!finish}{\!barcurve}}
\def\putrectangle{%
  \!ifnextchar<{\!putrectangle}{\!putrectangle<\!zpt,\!zpt> }}
\def\!putrectangle<#1,#2> corners at #3 #4 and #5 #6 {%
%
  \!xone=\!M{#3}\!xunit  \!xtwo=\!M{#5}\!xunit%
  \!yone=\!M{#4}\!yunit  \!ytwo=\!M{#6}\!yunit%
  \ifdim \!xtwo<\!xone
    \!dimenI=\!xone  \!xone=\!xtwo  \!xtwo=\!dimenI
  \fi
  \ifdim \!ytwo<\!yone
    \!dimenI=\!yone  \!yone=\!ytwo  \!ytwo=\!dimenI
  \fi
  \!dimenI=#1\relax  \advance\!xone\!dimenI  \advance\!xtwo\!dimenI
  \!dimenI=#2\relax  \advance\!yone\!dimenI  \advance\!ytwo\!dimenI
  \let\!MRect=\!M
  \!setdimenmode
%
  \!shaderectangle
%
  \!dimenI=.5\linethickness
  \advance \!xone  -\!dimenI
  \advance \!xtwo   \!dimenI
  \putrule from {\!xone} {\!yone} to {\!xtwo} {\!yone} 
  \putrule from {\!xone} {\!ytwo} to {\!xtwo} {\!ytwo} 
%
  \advance \!xone   \!dimenI
  \advance \!xtwo  -\!dimenI%
  \advance \!yone  -\!dimenI
  \advance \!ytwo   \!dimenI
  \putrule from {\!xone} {\!yone} to {\!xone} {\!ytwo} 
  \putrule from {\!xtwo} {\!yone} to {\!xtwo} {\!ytwo} 
  \let\!M=\!MRect
  \ignorespaces}
\def\shaderectanglesoff{%
  \def\!shaderectangle{}%
  \ignorespaces}
\def\!!shaderectangle{%
  \!dimenA=\!xtwo  \advance \!dimenA -\!xone
  \!dimenB=\!ytwo  \advance \!dimenB -\!yone
  \ifdim \!dimenA<\!dimenB
    \!startvshade (\!xone,\!yone,\!ytwo)
    \!lshade      (\!xtwo,\!yone,\!ytwo)
  \else
    \!starthshade (\!yone,\!xone,\!xtwo)
    \!lshade      (\!ytwo,\!xone,\!xtwo)
  \fi
  \ignorespaces}
\def\frame{%
  \!ifnextchar<{\!frame}{\!frame<\!zpt> }}
\long\def\!frame<#1> #2{%
  \beginpicture
    \setcoordinatesystem units <1pt,1pt> point at 0 0 
    \put {#2} [Bl] at 0 0 
    \!dimenA=#1\relax
    \!dimenB=\!wd \advance \!dimenB \!dimenA
    \!dimenC=\!ht \advance \!dimenC \!dimenA
    \!dimenD=\!dp \advance \!dimenD \!dimenA
    \let\!MFr=\!M
    \!setdimenmode
    \putrectangle corners at {-\!dimenA} {-\!dimenD} and {\!dimenB} {\!dimenC}
    \!setcoordmode
    \let\!M=\!MFr
  \endpicture
  \ignorespaces}
\def\rectangle <#1> <#2> {%
  \setbox0=\hbox{}\wd0=#1\ht0=#2\frame {\box0}}
\def\plot{%
  \!ifnextchar"{\!plotfromfile}{\!drawcurve}}
\def\!plotfromfile"#1"{%
  \expandafter\!drawcurve \input #1 /}
\def\setquadratic{%
  \let\!drawcurve=\!qcurve
  \let\!!Shade=\!!qShade
  \let\!!!Shade=\!!!qShade}
\def\setlinear{%
  \let\!drawcurve=\!lcurve
  \let\!!Shade=\!!lShade
  \let\!!!Shade=\!!!lShade}
\def\sethistograms{%
  \let\!drawcurve=\!hcurve}
\def\!qcurve #1 #2 {%
  \!start (#1,#2)
  \!Qjoin}
\def\!Qjoin#1 #2 #3 #4 {%
  \!qjoin (#1,#2) (#3,#4)             
  \!ifnextchar/{\!finish}{\!Qjoin}}
\def\!lcurve #1 #2 {%
  \!start (#1,#2)
  \!Ljoin}
\def\!Ljoin#1 #2 {%
  \!ljoin (#1,#2)                    
  \!ifnextchar/{\!finish}{\!Ljoin}}
\def\!finish/{\ignorespaces}
\def\!hcurve #1 #2 {%
  \edef\!hxS{#1}%
  \edef\!hyS{#2}%
  \!hjoin}
\def\!hjoin#1 #2 {%
  \putrectangle corners at {\!hxS} {\!hyS} and {#1} {#2}
  \edef\!hxS{#1}%
  \!ifnextchar/{\!finish}{\!hjoin}}
\def\vshade #1 #2 #3 {%
  \!startvshade (#1,#2,#3)
  \!Shadewhat}
\def\hshade #1 #2 #3 {%
  \!starthshade (#1,#2,#3)
  \!Shadewhat}
\def\!Shadewhat{%
  \futurelet\!nextchar\!Shade}
\def\!Shade{%
  \if <\!nextchar
    \def\!nextShade{\!!Shade}%
  \else
    \if /\!nextchar
      \def\!nextShade{\!finish}%
    \else
      \def\!nextShade{\!!!Shade}%
    \fi
  \fi
  \!nextShade}
\def\!!lShade<#1> #2 #3 #4 {%
  \!lshade <#1> (#2,#3,#4)                 
  \!Shadewhat}
\def\!!!lShade#1 #2 #3 {%
  \!lshade (#1,#2,#3)
  \!Shadewhat} 
\def\!!qShade<#1> #2 #3 #4 #5 #6 #7 {%
  \!qshade <#1> (#2,#3,#4) (#5,#6,#7)      
  \!Shadewhat}
\def\!!!qShade#1 #2 #3 #4 #5 #6 {%
  \!qshade (#1,#2,#3) (#4,#5,#6)
  \!Shadewhat} 
\def\setdashpattern <#1>{%
  \def\!Flist{}\def\!Blist{}\def\!UDlist{}%
  \!countA=0
  \!ecfor\!item:=#1\do{%
    \!dimenA=\!item\relax
    \expandafter\!rightappend\the\!dimenA\withCS{\\}\to\!UDlist%
    \advance\!countA  1
    \ifodd\!countA
      \expandafter\!rightappend\the\!dimenA\withCS{\!Rule}\to\!Flist%
      \expandafter\!leftappend\the\!dimenA\withCS{\!Rule}\to\!Blist%
    \else 
      \expandafter\!rightappend\the\!dimenA\withCS{\!Skip}\to\!Flist%
      \expandafter\!leftappend\the\!dimenA\withCS{\!Skip}\to\!Blist%
    \fi}%
  \!leaderlength=\!zpt
  \def\!Rule##1{\advance\!leaderlength  ##1}%
  \def\!Skip##1{\advance\!leaderlength  ##1}%
  \!Flist%
  \ifdim\!leaderlength>\!zpt 
  \else
    \def\!Flist{\!Skip{24in}}\def\!Blist{\!Skip{24in}}\ignorespaces
    \def\!UDlist{\\{\!zpt}\\{24in}}\ignorespaces
    \!leaderlength=24in
  \fi
  \!dashingon}
\def\!dashingon{%
  \def\!advancedashing{\!!advancedashing}%
  \def\!drawlinearsegment{\!lineardashed}%
  \def\!puthline{\!putdashedhline}%
  \def\!putvline{\!putdashedvline}%
  \ignorespaces}%
\def\!dashingoff{%
  \def\!advancedashing{\relax}%
  \def\!drawlinearsegment{\!linearsolid}%
  \def\!puthline{\!putsolidhline}%
  \def\!putvline{\!putsolidvline}%
  \ignorespaces}
\def\setdots{%
  \!ifnextchar<{\!setdots}{\!setdots<5pt>}}
\def\!setdots<#1>{%
  \!dimenB=#1\advance\!dimenB -\plotsymbolspacing
  \ifdim\!dimenB<\!zpt
    \!dimenB=\!zpt
  \fi
\setdashpattern <\plotsymbolspacing,\!dimenB>}
\def\setdotsnear <#1> for <#2>{%
  \!dimenB=#2\relax  \advance\!dimenB -.05pt  
  \!dimenC=#1\relax  \!countA=\!dimenC 
  \!dimenD=\!dimenB  \advance\!dimenD .5\!dimenC  \!countB=\!dimenD
  \divide \!countB  \!countA
  \ifnum 1>\!countB 
    \!countB=1
  \fi
  \divide\!dimenB  \!countB
  \setdots <\!dimenB>}
\def\setdashes{%
  \!ifnextchar<{\!setdashes}{\!setdashes<5pt>}}
\def\!setdashes<#1>{\setdashpattern <#1,#1>}
\def\setdashesnear <#1> for <#2>{%
  \!dimenB=#2\relax  
  \!dimenC=#1\relax  \!countA=\!dimenC 
  \!dimenD=\!dimenB  \advance\!dimenD .5\!dimenC  \!countB=\!dimenD
  \divide \!countB  \!countA
  \ifodd \!countB 
  \else 
    \advance \!countB  1
  \fi
  \divide\!dimenB  \!countB
  \setdashes <\!dimenB>}
\def\setsolid{%
  \def\!Flist{\!Rule{24in}}\def\!Blist{\!Rule{24in}}%
  \def\!UDlist{\\{24in}\\{\!zpt}}%
  \!dashingoff}  
\def\!divide#1#2#3{%
  \!dimenB=#1
  \!dimenC=#2
  \!dimenD=\!dimenB
  \divide \!dimenD \!dimenC
  \!dimenA=\!dimenD
  \multiply\!dimenD \!dimenC
  \advance\!dimenB -\!dimenD
  \!dimenD=\!dimenC
    \ifdim\!dimenD<\!zpt \!dimenD=-\!dimenD 
  \fi
  \ifdim\!dimenD<64pt
    \!divstep[\!tfs]\!divstep[\!tfs]%
  \else 
    \!!divide
  \fi
  #3=\!dimenA\ignorespaces}
\def\!!divide{%
  \ifdim\!dimenD<256pt
    \!divstep[64]\!divstep[32]\!divstep[32]%
  \else 
    \!divstep[8]\!divstep[8]\!divstep[8]\!divstep[8]\!divstep[8]%
    \!dimenA=2\!dimenA
  \fi}
\def\!divstep[#1]{
  \!dimenB=#1\!dimenB
  \!dimenD=\!dimenB
    \divide \!dimenD by \!dimenC
  \!dimenA=#1\!dimenA
    \advance\!dimenA by \!dimenD%
  \multiply\!dimenD by \!dimenC
    \advance\!dimenB by -\!dimenD}
\def\Divide <#1> by <#2> forming <#3> {%
  \!divide{#1}{#2}{#3}}
\def\ellipticalarc axes ratio #1:#2 #3 degrees from #4 #5 center at #6 #7 {%
  \!angle=#3pt\relax
  \ifdim\!angle>\!zpt 
    \def\!sign{}
  \else 
    \def\!sign{-}\!angle=-\!angle
  \fi
  \!xxloc=\!M{#6}\!xunit
  \!yyloc=\!M{#7}\!yunit     
  \!xxS=\!M{#4}\!xunit
  \!yyS=\!M{#5}\!yunit
  \advance\!xxS -\!xxloc
  \advance\!yyS -\!yyloc
  \!divide\!xxS{#1pt}\!xxS 
  \!divide\!yyS{#2pt}\!yyS 
  \let\!MC=\!M
  \!setdimenmode
  \!xS=#1\!xxS  \advance\!xS\!xxloc
  \!yS=#2\!yyS  \advance\!yS\!yyloc
  \!start (\!xS,\!yS)%
  \!loop\ifdim\!angle>14.9999pt
    \!rotate(\!xxS,\!yyS)by(\!cos,\!sign\!sin)to(\!xxM,\!yyM) 
    \!rotate(\!xxM,\!yyM)by(\!cos,\!sign\!sin)to(\!xxE,\!yyE)
    \!xM=#1\!xxM  \advance\!xM\!xxloc  \!yM=#2\!yyM  \advance\!yM\!yyloc
    \!xE=#1\!xxE  \advance\!xE\!xxloc  \!yE=#2\!yyE  \advance\!yE\!yyloc
    \!qjoin (\!xM,\!yM) (\!xE,\!yE)
    \!xxS=\!xxE  \!yyS=\!yyE 
    \advance \!angle -15pt
  \repeat
  \ifdim\!angle>\!zpt
    \!angle=100.53096\!angle
    \divide \!angle 360 
    \!sinandcos\!angle\!!sin\!!cos
    \!rotate(\!xxS,\!yyS)by(\!!cos,\!sign\!!sin)to(\!xxM,\!yyM) 
    \!rotate(\!xxM,\!yyM)by(\!!cos,\!sign\!!sin)to(\!xxE,\!yyE)
    \!xM=#1\!xxM  \advance\!xM\!xxloc  \!yM=#2\!yyM  \advance\!yM\!yyloc
    \!xE=#1\!xxE  \advance\!xE\!xxloc  \!yE=#2\!yyE  \advance\!yE\!yyloc
    \!qjoin (\!xM,\!yM) (\!xE,\!yE)
  \fi
  \let\!M=\!MC
  \ignorespaces}
\def\!rotate(#1,#2)by(#3,#4)to(#5,#6){%
  \!dimenA=#3#1\advance \!dimenA -#4#2
  \!dimenB=#3#2\advance \!dimenB  #4#1
  \divide \!dimenA 32  \divide \!dimenB 32 
  #5=\!dimenA  #6=\!dimenB
  \ignorespaces}
\def\!sin{4.17684}
\def\!cos{31.72624}
\def\!sinandcos#1#2#3{%
 \!dimenD=#1
 \!dimenA=\!dimenD
 \!dimenB=32pt
 \!removept\!dimenD\!value
 \!dimenC=\!dimenD
 \!dimenC=\!value\!dimenC \divide\!dimenC by 64 
 \advance\!dimenB by -\!dimenC
 \!dimenC=\!value\!dimenC \divide\!dimenC by 96 
 \advance\!dimenA by -\!dimenC
 \!dimenC=\!value\!dimenC \divide\!dimenC by 128 
 \advance\!dimenB by \!dimenC%
 \!removept\!dimenA#2
 \!removept\!dimenB#3
 \ignorespaces}
\def\putrule#1from #2 #3 to #4 #5 {%
  \!xloc=\!M{#2}\!xunit  \!xxloc=\!M{#4}\!xunit%
  \!yloc=\!M{#3}\!yunit  \!yyloc=\!M{#5}\!yunit%
  \!dxpos=\!xxloc  \advance\!dxpos by -\!xloc
  \!dypos=\!yyloc  \advance\!dypos by -\!yloc
  \ifdim\!dypos=\!zpt
    \def\!!Line{\!puthline{#1}}\ignorespaces
  \else
    \ifdim\!dxpos=\!zpt
      \def\!!Line{\!putvline{#1}}\ignorespaces
    \else 
       \def\!!Line{}
    \fi
  \fi
  \let\!ML=\!M
  \!setdimenmode
  \!!Line%
  \let\!M=\!ML
  \ignorespaces}
\def\!putsolidhline#1{%
  \ifdim\!dxpos>\!zpt 
    \put{\!hline\!dxpos}#1[l] at {\!xloc} {\!yloc}
  \else 
    \put{\!hline{-\!dxpos}}#1[l] at {\!xxloc} {\!yyloc}
  \fi
  \ignorespaces}
\def\!putsolidvline#1{%
  \ifdim\!dypos>\!zpt 
    \put{\!vline\!dypos}#1[b] at {\!xloc} {\!yloc}
  \else 
    \put{\!vline{-\!dypos}}#1[b] at {\!xxloc} {\!yyloc}
  \fi
  \ignorespaces}
\def\!hline#1{\hbox to #1{\leaders \hrule height\linethickness\hfill}}
\def\!vline#1{\vbox to #1{\leaders \vrule width\linethickness\vfill}}
\def\!putdashedhline#1{%
  \ifdim\!dxpos>\!zpt 
    \!DLsetup\!Flist\!dxpos
    \put{\hbox to \!totalleaderlength{\!hleaders}\!hpartialpattern\!Rtrunc}
      #1[l] at {\!xloc} {\!yloc} 
  \else 
    \!DLsetup\!Blist{-\!dxpos}
    \put{\!hpartialpattern\!Ltrunc\hbox to \!totalleaderlength{\!hleaders}}
      #1[r] at {\!xloc} {\!yloc} 
  \fi
  \ignorespaces}
\def\!putdashedvline#1{%
  \!dypos=-\!dypos
  \ifdim\!dypos>\!zpt 
    \!DLsetup\!Flist\!dypos 
    \put{\vbox{\vbox to \!totalleaderlength{\!vleaders}
      \!vpartialpattern\!Rtrunc}}#1[t] at {\!xloc} {\!yloc} 
  \else 
    \!DLsetup\!Blist{-\!dypos}
    \put{\vbox{\!vpartialpattern\!Ltrunc
      \vbox to \!totalleaderlength{\!vleaders}}}#1[b] at {\!xloc} {\!yloc} 
  \fi
  \ignorespaces}
\def\!DLsetup#1#2{
  \let\!RSlist=#1
  \!countB=#2
  \!countA=\!leaderlength
  \divide\!countB by \!countA
  \!totalleaderlength=\!countB\!leaderlength
  \!Rresiduallength=#2%
  \advance \!Rresiduallength by -\!totalleaderlength
  \!Lresiduallength=\!leaderlength
  \advance \!Lresiduallength by -\!Rresiduallength
  \ignorespaces}
\def\!hleaders{%
  \def\!Rule##1{\vrule height\linethickness width##1}%
  \def\!Skip##1{\hskip##1}%
  \leaders\hbox{\!RSlist}\hfill}
\def\!hpartialpattern#1{%
  \!dimenA=\!zpt \!dimenB=\!zpt 
  \def\!Rule##1{#1{##1}\vrule height\linethickness width\!dimenD}%
  \def\!Skip##1{#1{##1}\hskip\!dimenD}%
  \!RSlist}
\def\!vleaders{%
  \def\!Rule##1{\hrule width\linethickness height##1}%
  \def\!Skip##1{\vskip##1}%
  \leaders\vbox{\!RSlist}\vfill}
\def\!vpartialpattern#1{%
  \!dimenA=\!zpt \!dimenB=\!zpt 
  \def\!Rule##1{#1{##1}\hrule width\linethickness height\!dimenD}%
  \def\!Skip##1{#1{##1}\vskip\!dimenD}%
  \!RSlist}
\def\!Rtrunc#1{\!trunc{#1}>\!Rresiduallength}
\def\!Ltrunc#1{\!trunc{#1}<\!Lresiduallength}
\def\!trunc#1#2#3{%
  \!dimenA=\!dimenB         
  \advance\!dimenB by #1%
  \!dimenD=\!dimenB  \ifdim\!dimenD#2#3\!dimenD=#3\fi
  \!dimenC=\!dimenA  \ifdim\!dimenC#2#3\!dimenC=#3\fi
  \advance \!dimenD by -\!dimenC}
\def\!start (#1,#2){%
  \!plotxorigin=\!xorigin  \advance \!plotxorigin by \!plotsymbolxshift
  \!plotyorigin=\!yorigin  \advance \!plotyorigin by \!plotsymbolyshift
  \!xS=\!M{#1}\!xunit \!yS=\!M{#2}\!yunit
  \!rotateaboutpivot\!xS\!yS
  \!copylist\!UDlist\to\!!UDlist
  \!getnextvalueof\!downlength\from\!!UDlist
  \!distacross=\!zpt
  \!intervalno=0 
  \global\totalarclength=\!zpt
  \ignorespaces}
\def\!ljoin (#1,#2){%
  \advance\!intervalno by 1
  \!xE=\!M{#1}\!xunit \!yE=\!M{#2}\!yunit
  \!rotateaboutpivot\!xE\!yE
  \!xdiff=\!xE \advance \!xdiff by -\!xS
  \!ydiff=\!yE \advance \!ydiff by -\!yS
  \!Pythag\!xdiff\!ydiff\!arclength
  \global\advance \totalarclength by \!arclength%
  \!drawlinearsegment
  \!xS=\!xE \!yS=\!yE
  \ignorespaces}
\def\!linearsolid{%
  \!npoints=\!arclength
  \!countA=\plotsymbolspacing
  \divide\!npoints by \!countA
  \ifnum \!npoints<1 
    \!npoints=1 
  \fi
  \divide\!xdiff by \!npoints
  \divide\!ydiff by \!npoints
  \!xpos=\!xS \!ypos=\!yS
  \loop\ifnum\!npoints>-1
    \!plotifinbounds
    \advance \!xpos by \!xdiff
    \advance \!ypos by \!ydiff
    \advance \!npoints by -1
  \repeat
  \ignorespaces}
\def\!lineardashed{%
  \ifdim\!distacross>\!arclength
    \advance \!distacross by -\!arclength  
  \else
    \loop\ifdim\!distacross<\!arclength
      \!divide\!distacross\!arclength\!dimenA
      \!removept\!dimenA\!t
      \!xpos=\!t\!xdiff \advance \!xpos by \!xS
      \!ypos=\!t\!ydiff \advance \!ypos by \!yS
      \!plotifinbounds
      \advance\!distacross by \plotsymbolspacing
      \!advancedashing
    \repeat  
    \advance \!distacross by -\!arclength
  \fi
  \ignorespaces}
\def\!!advancedashing{%
  \advance\!downlength by -\plotsymbolspacing
  \ifdim \!downlength>\!zpt
  \else
    \advance\!distacross by \!downlength
    \!getnextvalueof\!uplength\from\!!UDlist
    \advance\!distacross by \!uplength
    \!getnextvalueof\!downlength\from\!!UDlist
  \fi}
\def\inboundscheckoff{%
  \def\!plotifinbounds{\!plot(\!xpos,\!ypos)}%
  \def\!initinboundscheck{\relax}\ignorespaces}
\def\!!plotifinbounds{%
  \ifdim \!xpos<\!checkleft
  \else
    \ifdim \!xpos>\!checkright
    \else
      \ifdim \!ypos<\!checkbot
      \else
         \ifdim \!ypos>\!checktop
         \else
           \!plot(\!xpos,\!ypos)
         \fi 
      \fi
    \fi
  \fi}
\def\!!initinboundscheck{%
  \!checkleft=\!arealloc     \advance\!checkleft by \!xorigin
  \!checkright=\!arearloc    \advance\!checkright by \!xorigin
  \!checkbot=\!areabloc      \advance\!checkbot by \!yorigin
  \!checktop=\!areatloc      \advance\!checktop by \!yorigin}
\def\!logten#1#2{%
  \expandafter\!!logten#1\!nil
  \!removept\!dimenF#2%
  \ignorespaces}
\def\!!logten#1#2\!nil{%
  \if -#1%
    \!dimenF=\!zpt
    \def\!next{\ignorespaces}%
  \else
    \if +#1%
      \def\!next{\!!logten#2\!nil}%
    \else
      \if .#1%
        \def\!next{\!!logten0.#2\!nil}%
      \else
        \def\!next{\!!!logten#1#2..\!nil}%
      \fi
    \fi
  \fi
  \!next}
\def\!!!logten#1#2.#3.#4\!nil{%
  \!dimenF=1pt 
  \if 0#1%
    \!!logshift#3pt 
  \else 
    \!logshift#2/
    \!dimenE=#1.#2#3pt 
  \fi 
  \ifdim \!dimenE<\!rootten
    \multiply \!dimenE 10 
    \advance  \!dimenF -1pt
  \fi
  \!dimenG=\!dimenE
    \advance\!dimenG 10pt
  \advance\!dimenE -10pt 
  \multiply\!dimenE 10 
  \!divide\!dimenE\!dimenG\!dimenE
  \!removept\!dimenE\!t
  \!dimenG=\!t\!dimenE
  \!removept\!dimenG\!tt
  \!dimenH=\!tt\!tenAe
    \divide\!dimenH 100
  \advance\!dimenH \!tenAc
  \!dimenH=\!tt\!dimenH
    \divide\!dimenH 100   
  \advance\!dimenH \!tenAa
  \!dimenH=\!t\!dimenH
    \divide\!dimenH 100 
  \advance\!dimenF \!dimenH}
\def\!logshift#1{%
  \if #1/%
    \def\!next{\ignorespaces}%
  \else
    \advance\!dimenF 1pt 
    \def\!next{\!logshift}%
  \fi 
  \!next}
 \def\!!logshift#1{%
   \advance\!dimenF -1pt
   \if 0#1%
     \def\!next{\!!logshift}%
   \else
     \if p#1%
       \!dimenF=1pt
       \def\!next{\!dimenE=1p}%
     \else
       \def\!next{\!dimenE=#1.}%
     \fi
   \fi
   \!next}
\def\beginpicture{%
  \setbox\!picbox=\hbox\bgroup%
  \!xleft=\maxdimen  
  \!xright=-\maxdimen
  \!ybot=\maxdimen
  \!ytop=-\maxdimen}
\def\endpicture{%
  \ifdim\!xleft=\maxdimen
    \!xleft=\!zpt \!xright=\!zpt \!ybot=\!zpt \!ytop=\!zpt 
  \fi
  \global\!Xleft=\!xleft \global\!Xright=\!xright
  \global\!Ybot=\!ybot \global\!Ytop=\!ytop
  \egroup%
  \ht\!picbox=\!Ytop  \dp\!picbox=-\!Ybot
  \ifdim\!Ybot>\!zpt
  \else 
    \ifdim\!Ytop<\!zpt
      \!Ybot=\!Ytop
    \else
      \!Ybot=\!zpt
    \fi
  \fi
  \hbox{\kern-\!Xleft\lower\!Ybot\box\!picbox\kern\!Xright}}
\def\endpicturesave <#1,#2>{%
  \endpicture \global #1=\!Xleft \global #2=\!Ybot \ignorespaces}
\def\setcoordinatesystem{%
  \!ifnextchar{u}{\!getlengths }
    {\!getlengths units <\!xunit,\!yunit>}}
\def\!getlengths units <#1,#2>{%
  \!xunit=#1\relax
  \!yunit=#2\relax
  \!ifcoordmode 
    \let\!SCnext=\!SCccheckforRP
  \else
    \let\!SCnext=\!SCdcheckforRP
  \fi
  \!SCnext}
\def\!SCccheckforRP{%
  \!ifnextchar{p}{\!cgetreference }
    {\!cgetreference point at {\!xref} {\!yref} }}
\def\!cgetreference point at #1 #2 {%
  \edef\!xref{#1}\edef\!yref{#2}%
  \!xorigin=\!xref\!xunit  \!yorigin=\!yref\!yunit  
  \!initinboundscheck 
  \ignorespaces}
\def\!SCdcheckforRP{%
  \!ifnextchar{p}{\!dgetreference}%
    {\ignorespaces}}
\def\!dgetreference point at #1 #2 {%
  \!xorigin=#1\relax  \!yorigin=#2\relax
  \ignorespaces}
\long\def\put#1#2 at #3 #4 {%
  \!setputobject{#1}{#2}%
  \!xpos=\!M{#3}\!xunit  \!ypos=\!M{#4}\!yunit  
  \!rotateaboutpivot\!xpos\!ypos%
  \advance\!xpos -\!xorigin  \advance\!xpos -\!xshift
  \advance\!ypos -\!yorigin  \advance\!ypos -\!yshift
  \kern\!xpos\raise\!ypos\box\!putobject\kern-\!xpos%
  \!doaccounting\ignorespaces}
\long\def\multiput #1#2 at {%
  \!setputobject{#1}{#2}%
  \!ifnextchar"{\!putfromfile}{\!multiput}}
\def\!putfromfile"#1"{%
  \expandafter\!multiput \input #1 /}
\def\!multiput{%
  \futurelet\!nextchar\!!multiput}
\def\!!multiput{%
  \if *\!nextchar
    \def\!nextput{\!alsoby}%
  \else
    \if /\!nextchar
      \def\!nextput{\!finishmultiput}%
    \else
      \def\!nextput{\!alsoat}%
    \fi
  \fi
  \!nextput}
\def\!finishmultiput/{%
  \setbox\!putobject=\hbox{}%
  \ignorespaces}
\def\!alsoat#1 #2 {%
  \!xpos=\!M{#1}\!xunit  \!ypos=\!M{#2}\!yunit  
  \!rotateaboutpivot\!xpos\!ypos%
  \advance\!xpos -\!xorigin  \advance\!xpos -\!xshift
  \advance\!ypos -\!yorigin  \advance\!ypos -\!yshift
  \kern\!xpos\raise\!ypos\copy\!putobject\kern-\!xpos%
  \!doaccounting
  \!multiput}
\def\!alsoby*#1 #2 #3 {%
  \!dxpos=\!M{#2}\!xunit \!dypos=\!M{#3}\!yunit 
  \!rotateonly\!dxpos\!dypos
  \!ntemp=#1%
  \!!loop\ifnum\!ntemp>0
    \advance\!xpos by \!dxpos  \advance\!ypos by \!dypos
    \kern\!xpos\raise\!ypos\copy\!putobject\kern-\!xpos%
    \advance\!ntemp by -1
  \repeat
  \!doaccounting 
  \!multiput}
\def\accountingon{\def\!doaccounting{\!!doaccounting}\ignorespaces}
\def\!!doaccounting{%
  \!xtemp=\!xpos  
  \!ytemp=\!ypos
  \ifdim\!xtemp<\!xleft 
     \!xleft=\!xtemp 
  \fi
  \advance\!xtemp by  \!wd 
  \ifdim\!xright<\!xtemp 
    \!xright=\!xtemp
  \fi
  \advance\!ytemp by -\!dp
  \ifdim\!ytemp<\!ybot  
    \!ybot=\!ytemp
  \fi
  \advance\!ytemp by  \!dp
  \advance\!ytemp by  \!ht 
  \ifdim\!ytemp>\!ytop  
    \!ytop=\!ytemp  
  \fi}
\long\def\!setputobject#1#2{%
  \setbox\!putobject=\hbox{#1}%
  \!ht=\ht\!putobject  \!dp=\dp\!putobject  \!wd=\wd\!putobject
  \wd\!putobject=\!zpt
  \!xshift=.5\!wd   \!yshift=.5\!ht   \advance\!yshift by -.5\!dp
  \edef\!putorientation{#2}%
  \expandafter\!SPOreadA\!putorientation[]\!nil%
  \expandafter\!SPOreadB\!putorientation<\!zpt,\!zpt>\!nil\ignorespaces}
\def\!SPOreadA#1[#2]#3\!nil{\!etfor\!orientation:=#2\do\!SPOreviseshift}
\def\!SPOreadB#1<#2,#3>#4\!nil{\advance\!xshift by -#2\advance\!yshift by -#3}
\def\!SPOreviseshift{%
  \if l\!orientation 
    \!xshift=\!zpt
  \else 
    \if r\!orientation 
      \!xshift=\!wd
    \else 
      \if b\!orientation
        \!yshift=-\!dp
      \else 
        \if B\!orientation 
          \!yshift=\!zpt
        \else 
          \if t\!orientation 
            \!yshift=\!ht
          \fi 
        \fi
      \fi
    \fi
  \fi}
\long\def\!dimenput#1#2(#3,#4){%
  \!setputobject{#1}{#2}%
  \!xpos=#3\advance\!xpos by -\!xshift
  \!ypos=#4\advance\!ypos by -\!yshift
  \kern\!xpos\raise\!ypos\box\!putobject\kern-\!xpos%
  \!doaccounting\ignorespaces}
\def\!setdimenmode{%
  \let\!M=\!M!!\ignorespaces}
\def\!setcoordmode{%
  \let\!M=\!M!\ignorespaces}
\def\!ifcoordmode{%
  \ifx \!M \!M!}
\def\!ifdimenmode{%
  \ifx \!M \!M!!}
\def\!M!#1#2{#1#2} 
\def\!M!!#1#2{#1}
\let\setdimensionmode=\!setdimenmode
\let\setcoordinatemode=\!setcoordmode
\def\!stack[#1]{%
  \let\!lglue=\hfill \let\!rglue=\hfill
  \expandafter\let\csname !#1glue\endcsname=\relax
  \!ifnextchar<{\!!stack}{\!!stack<\stackleading>}}
\def\!!stack<#1>#2{%
  \vbox{\def\!valueslist{}\!ecfor\!value:=#2\do{%
    \expandafter\!rightappend\!value\withCS{\\}\to\!valueslist}%
    \!lop\!valueslist\to\!value
    \let\\=\cr\lineskiplimit=\maxdimen\lineskip=#1%
    \baselineskip=-1000pt\halign{\!lglue##\!rglue\cr \!value\!valueslist\cr}}%
  \ignorespaces}
\def\!lines[#1]#2{%
  \let\!lglue=\hfill \let\!rglue=\hfill
  \expandafter\let\csname !#1glue\endcsname=\relax
  \vbox{\halign{\!lglue##\!rglue\cr #2\crcr}}%
  \ignorespaces}
\def\!Lines[#1]#2{%
  \let\!lglue=\hfill \let\!rglue=\hfill
  \expandafter\let\csname !#1glue\endcsname=\relax
  \vtop{\halign{\!lglue##\!rglue\cr #2\crcr}}%
  \ignorespaces}
\def\setplotsymbol(#1#2){%
  \!setputobject{#1}{#2}
  \setbox\!plotsymbol=\box\!putobject%
  \!plotsymbolxshift=\!xshift 
  \!plotsymbolyshift=\!yshift 
  \ignorespaces}
\def\!!plot(#1,#2){%
  \!dimenA=-\!plotxorigin \advance \!dimenA by #1
  \!dimenB=-\!plotyorigin \advance \!dimenB by #2
  \kern\!dimenA\raise\!dimenB\copy\!plotsymbol\kern-\!dimenA%
  \ignorespaces}
\def\!!!plot(#1,#2){%
  \!dimenA=-\!plotxorigin \advance \!dimenA by #1
  \!dimenB=-\!plotyorigin \advance \!dimenB by #2
  \kern\!dimenA\raise\!dimenB\copy\!plotsymbol\kern-\!dimenA%
  \!countE=\!dimenA
  \!countF=\!dimenB
  \immediate\write\!replotfile{\the\!countE,\the\!countF.}%
  \ignorespaces}
\def\savelinesandcurves on "#1" {%
  \immediate\closeout\!replotfile
  \immediate\openout\!replotfile=#1%
  \let\!plot=\!!!plot}
\def\dontsavelinesandcurves {%
  \let\!plot=\!!plot}
\xdef\!Commentsignal{
\def\writesavefile#1 {%
  \immediate\write\!replotfile{\!Commentsignal #1}%
  \ignorespaces}

\def\replot"#1" {%
  \expandafter\!replot\input #1 /}
\def\!replot#1,#2. {%
  \!dimenA=#1sp
  \kern\!dimenA\raise#2sp\copy\!plotsymbol\kern-\!dimenA
  \futurelet\!nextchar\!!replot}
\def\!!replot{%
  \if /\!nextchar 
    \def\!next{\!finish}%
  \else
    \def\!next{\!replot}%
  \fi
  \!next}


 
 
\def\!Pythag#1#2#3{%
  \!dimenE=#1\relax                                     
  \ifdim\!dimenE<\!zpt 
    \!dimenE=-\!dimenE 
  \fi
  \!dimenF=#2\relax
  \ifdim\!dimenF<\!zpt 
    \!dimenF=-\!dimenF 
  \fi
  \advance \!dimenF by \!dimenE
  \ifdim\!dimenF=\!zpt 
    \!dimenG=\!zpt
  \else 
    \!divide{8\!dimenE}\!dimenF\!dimenE
    \advance\!dimenE by -4pt
      \!dimenE=2\!dimenE
    \!removept\!dimenE\!!t
    \!dimenE=\!!t\!dimenE
    \advance\!dimenE by 64pt
    \divide \!dimenE by 2
    \!dimenH=7pt
    \!!Pythag\!!Pythag\!!Pythag
    \!removept\!dimenH\!!t
    \!dimenG=\!!t\!dimenF
    \divide\!dimenG by 8
  \fi
  #3=\!dimenG
  \ignorespaces}

\def\!!Pythag{
  \!divide\!dimenE\!dimenH\!dimenI
  \advance\!dimenH by \!dimenI
    \divide\!dimenH by 2}

\def\placehypotenuse for <#1> and <#2> in <#3> {%
  \!Pythag{#1}{#2}{#3}}

 
 
 
\def\!qjoin (#1,#2) (#3,#4){%
  \advance\!intervalno by 1
  \!ifcoordmode
    \edef\!xmidpt{#1}\edef\!ymidpt{#2}%
  \else
    \!dimenA=#1\relax \edef\!xmidpt{\the\!dimenA}%
    \!dimenA=#2\relax \edef\!ymidpt{\the\!dimenA}%
  \fi
  \!xM=\!M{#1}\!xunit  \!yM=\!M{#2}\!yunit   \!rotateaboutpivot\!xM\!yM
  \!xE=\!M{#3}\!xunit  \!yE=\!M{#4}\!yunit   \!rotateaboutpivot\!xE\!yE
%
  \!dimenA=\!xM  \advance \!dimenA by -\!xS
  \!dimenB=\!xE  \advance \!dimenB by -\!xM
  \!xB=3\!dimenA \advance \!xB by -\!dimenB
  \!xC=2\!dimenB \advance \!xC by -2\!dimenA
%
  \!dimenA=\!yM  \advance \!dimenA by -\!yS%
  \!dimenB=\!yE  \advance \!dimenB by -\!yM%
  \!yB=3\!dimenA \advance \!yB by -\!dimenB%
  \!yC=2\!dimenB \advance \!yC by -2\!dimenA%
%
  \!xprime=\!xB  \!yprime=\!yB
  \!dxprime=.5\!xC  \!dyprime=.5\!yC
  \!getf \!midarclength=\!dimenA
  \!getf \advance \!midarclength by 4\!dimenA
  \!getf \advance \!midarclength by \!dimenA
  \divide \!midarclength by 12
%
  \!arclength=\!dimenA
  \!getf \advance \!arclength by 4\!dimenA
  \!getf \advance \!arclength by \!dimenA
  \divide \!arclength by 12
  \advance \!arclength by \!midarclength
  \global\advance \totalarclength by \!arclength
%
%
  \ifdim\!distacross>\!arclength 
    \advance \!distacross by -\!arclength
  \else
    \!initinverseinterp
    \loop\ifdim\!distacross<\!arclength
      \!inverseinterp
      \!xpos=\!t\!xC \advance\!xpos by \!xB
        \!xpos=\!t\!xpos \advance \!xpos by \!xS
      \!ypos=\!t\!yC \advance\!ypos by \!yB
        \!ypos=\!t\!ypos \advance \!ypos by \!yS
      \!plotifinbounds
      \advance\!distacross \plotsymbolspacing
      \!advancedashing
    \repeat  
    \advance \!distacross by -\!arclength
  \fi
  \!xS=\!xE
  \!yS=\!yE
  \ignorespaces}

\def\!getf{\!Pythag\!xprime\!yprime\!dimenA%
  \advance\!xprime by \!dxprime
  \advance\!yprime by \!dyprime}

\def\!initinverseinterp{%
  \ifdim\!arclength>\!zpt
    \!divide{8\!midarclength}\!arclength\!dimenE
    \ifdim\!dimenE<\!wmin \!setinverselinear
    \else 
      \ifdim\!dimenE>\!wmax \!setinverselinear
      \else
        \def\!inverseinterp{\!inversequad}\ignorespaces
%
%
         \!removept\!dimenE\!Ew
         \!dimenF=-\!Ew\!dimenE
         \advance\!dimenF by 32pt
         \!dimenG=8pt 
         \advance\!dimenG by -\!dimenE
         \!dimenG=\!Ew\!dimenG
         \!divide\!dimenF\!dimenG\!beta
         \!gamma=1pt
         \advance \!gamma by -\!beta
      \fi
    \fi
  \fi
  \ignorespaces}

\def\!inversequad{%
  \!divide\!distacross\!arclength\!dimenG
  \!removept\!dimenG\!v
  \!dimenG=\!v\!gamma
  \advance\!dimenG by \!beta
  \!dimenG=\!v\!dimenG
  \!removept\!dimenG\!t}

\def\!setinverselinear{%
  \def\!inverseinterp{\!inverselinear}%
  \divide\!dimenE by 8 \!removept\!dimenE\!t
  \!countC=\!intervalno \multiply \!countC 2
  \!countB=\!countC     \advance \!countB -1
  \!countA=\!countB     \advance \!countA -1
  \wlog{\the\!countB th point (\!xmidpt,\!ymidpt) being plotted 
    doesn't lie in the}%
  \wlog{ middle third of the arc between the \the\!countA th 
    and \the\!countC th points:}%
  \wlog{ [arc length \the\!countA\space to \the\!countB]/[arc length 
    \the \!countA\space to \the\!countC]=\!t.}%
  \ignorespaces}
 
\def\!inverselinear{%
  \!divide\!distacross\!arclength\!dimenG
  \!removept\!dimenG\!t}

 

\def\startrotation{%
  \let\!rotateaboutpivot=\!!rotateaboutpivot
  \let\!rotateonly=\!!rotateonly
  \!ifnextchar{b}{\!getsincos }%
    {\!getsincos by {\!cosrotationangle} {\!sinrotationangle} }}
\def\!getsincos by #1 #2 {%
  \edef\!cosrotationangle{#1}%
  \edef\!sinrotationangle{#2}%
  \!ifcoordmode 
    \let\!ROnext=\!ccheckforpivot
  \else
    \let\!ROnext=\!dcheckforpivot
  \fi
  \!ROnext}
\def\!ccheckforpivot{%
  \!ifnextchar{a}{\!cgetpivot}%
    {\!cgetpivot about {\!xpivotcoord} {\!ypivotcoord} }}
\def\!cgetpivot about #1 #2 {%
  \edef\!xpivotcoord{#1}%
  \edef\!ypivotcoord{#2}%
  \!xpivot=#1\!xunit  \!ypivot=#2\!yunit
  \ignorespaces}
\def\!dcheckforpivot{%
  \!ifnextchar{a}{\!dgetpivot}{\ignorespaces}}
\def\!dgetpivot about #1 #2 {%
  \!xpivot=#1\relax  \!ypivot=#2\relax
  \ignorespaces}

\def\stoprotation{%
  \let\!rotateaboutpivot=\!!!rotateaboutpivot
  \let\!rotateonly=\!!!rotateonly
  \ignorespaces}
 
\def\!!rotateaboutpivot#1#2{%
  \!dimenA=#1\relax  \advance\!dimenA -\!xpivot
  \!dimenB=#2\relax  \advance\!dimenB -\!ypivot
  \!dimenC=\!cosrotationangle\!dimenA
    \advance \!dimenC -\!sinrotationangle\!dimenB
  \!dimenD=\!cosrotationangle\!dimenB
    \advance \!dimenD  \!sinrotationangle\!dimenA
  \advance\!dimenC \!xpivot  \advance\!dimenD \!ypivot
  #1=\!dimenC  #2=\!dimenD
  \ignorespaces}

\def\!!rotateonly#1#2{%
  \!dimenA=#1\relax  \!dimenB=#2\relax 
  \!dimenC=\!cosrotationangle\!dimenA
    \advance \!dimenC -\!rotsign\!sinrotationangle\!dimenB
  \!dimenD=\!cosrotationangle\!dimenB
    \advance \!dimenD  \!rotsign\!sinrotationangle\!dimenA
  #1=\!dimenC  #2=\!dimenD
  \ignorespaces}
\def\!rotsign{}
\def\!!!rotateaboutpivot#1#2{\relax}
\def\!!!rotateonly#1#2{\relax}
\stoprotation

\def\!reverserotateonly#1#2{%
  \def\!rotsign{-}%
  \!rotateonly{#1}{#2}%
  \def\!rotsign{}%
  \ignorespaces}

\def\!getspan span <#1>{%
  \!dshade=#1\relax
  \!ifcoordmode 
    \let\!GRnext=\!GRccheckforAP
  \else
    \let\!GRnext=\!GRdcheckforAP
  \fi
  \!GRnext}
\def\!GRccheckforAP{%
  \!ifnextchar{p}{\!cgetanchor }
    {\!cgetanchor point at {\!xshadesave} {\!yshadesave} }}
\def\!cgetanchor point at #1 #2 {%
  \edef\!xshadesave{#1}\edef\!yshadesave{#2}%
  \!xshade=\!xshadesave\!xunit  \!yshade=\!yshadesave\!yunit
  \ignorespaces}
\def\!GRdcheckforAP{%
  \!ifnextchar{p}{\!dgetanchor}%
    {\ignorespaces}}
\def\!dgetanchor point at #1 #2 {%
  \!xshade=#1\relax  \!yshade=#2\relax
  \ignorespaces}

\def\setshadesymbol{%
  \!ifnextchar<{\!setshadesymbol}{\!setshadesymbol<,,,> }}

\def\!setshadesymbol <#1,#2,#3,#4> (#5#6){%
  \!setputobject{#5}{#6}%
  \setbox\!shadesymbol=\box\!putobject%
  \!shadesymbolxshift=\!xshift \!shadesymbolyshift=\!yshift
%
  \!dimenA=\!xshift \advance\!dimenA \!smidge
  \!override\!dimenA{#1}\!lshrinkage%
  \!dimenA=\!wd \advance \!dimenA -\!xshift
    \advance\!dimenA \!smidge
    \!override\!dimenA{#2}\!rshrinkage
  \!dimenA=\!dp \advance \!dimenA \!yshift
    \advance\!dimenA \!smidge
    \!override\!dimenA{#3}\!bshrinkage
  \!dimenA=\!ht \advance \!dimenA -\!yshift
    \advance\!dimenA \!smidge
    \!override\!dimenA{#4}\!tshrinkage
  \ignorespaces}
\def\!smidge{-.2pt}%

\def\!override#1#2#3{%
  \edef\!!override{#2}%
  \ifx \!!override\empty
    #3=#1\relax
  \else
    \if z\!!override
      #3=\!zpt
    \else
      \ifx \!!override\!blankz
        #3=\!zpt
      \else
        #3=#2\relax
      \fi
    \fi
  \fi
  \ignorespaces}
\def\!blankz{ z}

\setshadesymbol ({\fiverm .})

\def\!startvshade#1(#2,#3,#4){%
  \let\!!xunit=\!xunit%
  \let\!!yunit=\!yunit%
  \let\!!xshade=\!xshade%
  \let\!!yshade=\!yshade%
  \def\!getshrinkages{\!vgetshrinkages}%
  \let\!setshadelocation=\!vsetshadelocation%
  \!xS=\!M{#2}\!!xunit
  \!ybS=\!M{#3}\!!yunit
  \!ytS=\!M{#4}\!!yunit
  \!shadexorigin=\!xorigin  \advance \!shadexorigin \!shadesymbolxshift
  \!shadeyorigin=\!yorigin  \advance \!shadeyorigin \!shadesymbolyshift
  \ignorespaces}
 
\def\!starthshade#1(#2,#3,#4){%
  \let\!!xunit=\!yunit%
  \let\!!yunit=\!xunit%
  \let\!!xshade=\!yshade%
  \let\!!yshade=\!xshade%
  \def\!getshrinkages{\!hgetshrinkages}%
  \let\!setshadelocation=\!hsetshadelocation%
  \!xS=\!M{#2}\!!xunit
  \!ybS=\!M{#3}\!!yunit
  \!ytS=\!M{#4}\!!yunit
  \!shadexorigin=\!xorigin  \advance \!shadexorigin \!shadesymbolxshift
  \!shadeyorigin=\!yorigin  \advance \!shadeyorigin \!shadesymbolyshift
  \ignorespaces}

\def\!lattice#1#2#3#4#5{%
  \!dimenA=#1
  \!dimenB=#2
  \!countB=\!dimenB
%
  \!dimenC=#3
  \advance\!dimenC -\!dimenA
  \!countA=\!dimenC
  \divide\!countA \!countB
  \ifdim\!dimenC>\!zpt
    \!dimenD=\!countA\!dimenB
    \ifdim\!dimenD<\!dimenC
      \advance\!countA 1 
    \fi
  \fi
  \!dimenC=\!countA\!dimenB
    \advance\!dimenC \!dimenA
  #4=\!countA
  #5=\!dimenC
  \ignorespaces}

\def\!qshade#1(#2,#3,#4)#5(#6,#7,#8){%
  \!xM=\!M{#2}\!!xunit
  \!ybM=\!M{#3}\!!yunit
  \!ytM=\!M{#4}\!!yunit
  \!xE=\!M{#6}\!!xunit
  \!ybE=\!M{#7}\!!yunit
  \!ytE=\!M{#8}\!!yunit
  \!getcoeffs\!xS\!ybS\!xM\!ybM\!xE\!ybE\!ybB\!ybC
  \!getcoeffs\!xS\!ytS\!xM\!ytM\!xE\!ytE\!ytB\!ytC
  \def\!getylimits{\!qgetylimits}%
  \!shade{#1}\ignorespaces}
 
\def\!lshade#1(#2,#3,#4){%
  \!xE=\!M{#2}\!!xunit
  \!ybE=\!M{#3}\!!yunit
  \!ytE=\!M{#4}\!!yunit
  \!dimenE=\!xE  \advance \!dimenE -\!xS
  \!dimenC=\!ytE \advance \!dimenC -\!ytS
  \!divide\!dimenC\!dimenE\!ytB
  \!dimenC=\!ybE \advance \!dimenC -\!ybS
  \!divide\!dimenC\!dimenE\!ybB
  \def\!getylimits{\!lgetylimits}%
  \!shade{#1}\ignorespaces}
 
\def\!getcoeffs#1#2#3#4#5#6#7#8{%
  \!dimenC=#4\advance \!dimenC -#2
  \!dimenE=#3\advance \!dimenE -#1
  \!divide\!dimenC\!dimenE\!dimenF
  \!dimenC=#6\advance \!dimenC -#4
  \!dimenH=#5\advance \!dimenH -#3
  \!divide\!dimenC\!dimenH\!dimenG
  \advance\!dimenG -\!dimenF
  \advance \!dimenH \!dimenE
  \!divide\!dimenG\!dimenH#8
  \!removept#8\!t
  #7=-\!t\!dimenE
  \advance #7\!dimenF
  \ignorespaces}

\def\!shade#1{%
  \!getshrinkages#1<,,,>\!nil
  \advance \!dimenE \!xS
  \!lattice\!!xshade\!dshade\!dimenE
    \!parity\!xpos
  \!dimenF=-\!dimenF
    \advance\!dimenF \!xE
  \!loop\!not{\ifdim\!xpos>\!dimenF}
    \!shadecolumn%
    \advance\!xpos \!dshade
    \advance\!parity 1
  \repeat
  \!xS=\!xE
  \!ybS=\!ybE
  \!ytS=\!ytE
  \ignorespaces}

\def\!vgetshrinkages#1<#2,#3,#4,#5>#6\!nil{%
  \!override\!lshrinkage{#2}\!dimenE
  \!override\!rshrinkage{#3}\!dimenF
  \!override\!bshrinkage{#4}\!dimenG
  \!override\!tshrinkage{#5}\!dimenH
  \ignorespaces}
\def\!hgetshrinkages#1<#2,#3,#4,#5>#6\!nil{%
  \!override\!lshrinkage{#2}\!dimenG
  \!override\!rshrinkage{#3}\!dimenH
  \!override\!bshrinkage{#4}\!dimenE
  \!override\!tshrinkage{#5}\!dimenF
  \ignorespaces}

\def\!shadecolumn{%
  \!dxpos=\!xpos
  \advance\!dxpos -\!xS
  \!removept\!dxpos\!dx
  \!getylimits
  \advance\!ytpos -\!dimenH
  \advance\!ybpos \!dimenG
  \!yloc=\!!yshade
  \ifodd\!parity 
     \advance\!yloc \!dshade
  \fi
  \!lattice\!yloc{2\!dshade}\!ybpos%
    \!countA\!ypos
  \!dimenA=-\!shadexorigin \advance \!dimenA \!xpos
  \loop\!not{\ifdim\!ypos>\!ytpos}
    \!setshadelocation
    \!rotateaboutpivot\!xloc\!yloc%
    \!dimenA=-\!shadexorigin \advance \!dimenA \!xloc
    \!dimenB=-\!shadeyorigin \advance \!dimenB \!yloc
    \kern\!dimenA \raise\!dimenB\copy\!shadesymbol \kern-\!dimenA
    \advance\!ypos 2\!dshade
  \repeat
  \ignorespaces}
 
\def\!qgetylimits{%
  \!dimenA=\!dx\!ytC              
  \advance\!dimenA \!ytB
  \!ytpos=\!dx\!dimenA
  \advance\!ytpos \!ytS
  \!dimenA=\!dx\!ybC              
  \advance\!dimenA \!ybB
  \!ybpos=\!dx\!dimenA
  \advance\!ybpos \!ybS}
 
\def\!lgetylimits{%
  \!ytpos=\!dx\!ytB
  \advance\!ytpos \!ytS
  \!ybpos=\!dx\!ybB
  \advance\!ybpos \!ybS}
 
\def\!vsetshadelocation{
  \!xloc=\!xpos
  \!yloc=\!ypos}
\def\!hsetshadelocation{
  \!xloc=\!ypos
  \!yloc=\!xpos}





\def\!axisticks {%
  \def\!nextkeyword##1 {%
    \expandafter\ifx\csname !ticks##1\endcsname \relax
      \def\!next{\!fixkeyword{##1}}%
    \else
      \def\!next{\csname !ticks##1\endcsname}%
    \fi
    \!next}%
  \!axissetup
    \def\!axissetup{\relax}%
  \edef\!ticksinoutsign{\!ticksinoutSign}%
  \!ticklength=\longticklength
  \!tickwidth=\linethickness
  \!gridlinestatus
  \!setticktransform
  \!maketick
  \!tickcase=0
  \def\!LTlist{}%
  \!nextkeyword}

\def\ticksout{%
  \def\!ticksinoutSign{+}}

\ticksout

\def\nogridlines{%
  \def\!gridlinestatus{\!gridlinestoofalse}}
\nogridlines

\def\loggedticks{%
  \def\!setticktransform{\let\!ticktransform=\!logten}}
\def\unloggedticks{%
  \def\!setticktransform{\let\!ticktransform=\!donothing}}
\def\!donothing#1#2{\def#2{#1}}
\unloggedticks

\expandafter\def\csname !ticks/\endcsname{%
  \!not {\ifx \!LTlist\empty}
    \!placetickvalues
  \fi
  \def\!tickvalueslist{}%
  \def\!LTlist{}%
  \expandafter\csname !axis/\endcsname}

\def\!maketick{%
  \setbox\!boxA=\hbox{%
    \beginpicture
      \!setdimenmode
      \setcoordinatesystem point at {\!zpt} {\!zpt}   
      \linethickness=\!tickwidth
      \ifdim\!ticklength>\!zpt
        \putrule from {\!zpt} {\!zpt} to
          {\!ticksinoutsign\!tickxsign\!ticklength}
          {\!ticksinoutsign\!tickysign\!ticklength}
      \fi
      \if!gridlinestoo
        \putrule from {\!zpt} {\!zpt} to
          {-\!tickxsign\!xaxislength} {-\!tickysign\!yaxislength}
      \fi
    \endpicturesave <\!Xsave,\!Ysave>}%
    \wd\!boxA=\!zpt}
  
\def\!ticksin{%
  \def\!ticksinoutsign{-}%
  \!maketick
  \!nextkeyword}

\def\!ticksout{%
  \def\!ticksinoutsign{+}%
  \!maketick
  \!nextkeyword}

\def\!tickslength<#1> {%
  \!ticklength=#1\relax
  \!maketick
  \!nextkeyword}

\def\!tickslong{%
  \!tickslength<\longticklength> }

\def\!ticksshort{%
  \!tickslength<\shortticklength> }

\def\!tickswidth<#1> {%
  \!tickwidth=#1\relax
  \!maketick
  \!nextkeyword}

\def\!ticksandacross{%
  \!gridlinestootrue
  \!maketick
  \!nextkeyword}

\def\!ticksbutnotacross{%
  \!gridlinestoofalse
  \!maketick
  \!nextkeyword}

\def\!tickslogged{%
  \let\!ticktransform=\!logten
  \!nextkeyword}

\def\!ticksunlogged{%
  \let\!ticktransform=\!donothing
  \!nextkeyword}

\def\!ticksunlabeled{%
  \!tickcase=0
  \!nextkeyword}

\def\!ticksnumbered{%
  \!tickcase=1
  \!nextkeyword}

\def\!tickswithvalues#1/ {%
  \edef\!tickvalueslist{#1! /}%
  \!tickcase=2
  \!nextkeyword}

\def\!ticksquantity#1 {%
  \ifnum #1>1
    \!updatetickoffset
    \!countA=#1\relax
    \advance \!countA -1
    \!ticklocationincr=\!axisLength
      \divide \!ticklocationincr \!countA
    \!ticklocation=\!axisstart
    \loop \!not{\ifdim \!ticklocation>\!axisend}
      \!placetick\!ticklocation
      \ifcase\!tickcase
          \relax 
        \or
          \relax 
        \or
          \expandafter\!gettickvaluefrom\!tickvalueslist
          \edef\!tickfield{{\the\!ticklocation}{\!value}}%
          \expandafter\!listaddon\expandafter{\!tickfield}\!LTlist%
      \fi
      \advance \!ticklocation \!ticklocationincr
    \repeat
  \fi
  \!nextkeyword}

\def\!ticksat#1 {%
  \!updatetickoffset
  \edef\!Loc{#1}%
  \if /\!Loc
    \def\next{\!nextkeyword}%
  \else
    \!ticksincommon
    \def\next{\!ticksat}%
  \fi
  \next}    
      
\def\!ticksfrom#1 to #2 by #3 {%
  \!updatetickoffset
  \edef\!arg{#3}%
  \expandafter\!separate\!arg\!nil
  \!scalefactor=1
  \expandafter\!countfigures\!arg/
  \edef\!arg{#1}%
  \!scaleup\!arg by\!scalefactor to\!countE
  \edef\!arg{#2}%
  \!scaleup\!arg by\!scalefactor to\!countF
  \edef\!arg{#3}%
  \!scaleup\!arg by\!scalefactor to\!countG
  \loop \!not{\ifnum\!countE>\!countF}
    \ifnum\!scalefactor=1
      \edef\!Loc{\the\!countE}%
    \else
      \!scaledown\!countE by\!scalefactor to\!Loc
    \fi
    \!ticksincommon
    \advance \!countE \!countG
  \repeat
  \!nextkeyword}

\def\!updatetickoffset{%
  \!dimenA=\!ticksinoutsign\!ticklength
  \ifdim \!dimenA>\!offset
    \!offset=\!dimenA
  \fi}

\def\!placetick#1{%
  \if!xswitch
    \!xpos=#1\relax
    \!ypos=\!axisylevel
  \else
    \!xpos=\!axisxlevel
    \!ypos=#1\relax
  \fi
  \advance\!xpos \!Xsave
  \advance\!ypos \!Ysave
  \kern\!xpos\raise\!ypos\copy\!boxA\kern-\!xpos
  \ignorespaces}

\def\!gettickvaluefrom#1 #2 /{%
  \edef\!value{#1}%
  \edef\!tickvalueslist{#2 /}%
  \ifx \!tickvalueslist\!endtickvaluelist
    \!tickcase=0
  \fi}
\def\!endtickvaluelist{! /}

\def\!ticksincommon{%
  \!ticktransform\!Loc\!t
  \!ticklocation=\!t\!!unit
  \advance\!ticklocation -\!!origin
  \!placetick\!ticklocation
  \ifcase\!tickcase
    \relax 
  \or 
    \ifdim\!ticklocation<-\!!origin
      \edef\!Loc{$\!Loc$}%
    \fi
    \edef\!tickfield{{\the\!ticklocation}{\!Loc}}%
    \expandafter\!listaddon\expandafter{\!tickfield}\!LTlist%
  \or 
    \expandafter\!gettickvaluefrom\!tickvalueslist
    \edef\!tickfield{{\the\!ticklocation}{\!value}}%
    \expandafter\!listaddon\expandafter{\!tickfield}\!LTlist%
  \fi}

\def\!separate#1\!nil{%
  \!ifnextchar{-}{\!!separate}{\!!!separate}#1\!nil}
\def\!!separate-#1\!nil{%
  \def\!sign{-}%
  \!!!!separate#1..\!nil}
\def\!!!separate#1\!nil{%
  \def\!sign{+}%
  \!!!!separate#1..\!nil}
\def\!!!!separate#1.#2.#3\!nil{%
  \def\!arg{#1}%
  \ifx\!arg\!empty
    \!countA=0
  \else
    \!countA=\!arg
  \fi
  \def\!arg{#2}%
  \ifx\!arg\!empty
    \!countB=0
  \else
    \!countB=\!arg
  \fi}
 
\def\!countfigures#1{%
  \if #1/%
    \def\!next{\ignorespaces}%
  \else
    \multiply\!scalefactor 10
    \def\!next{\!countfigures}%
  \fi
  \!next}

\def\!scaleup#1by#2to#3{%
  \expandafter\!separate#1\!nil
  \multiply\!countA #2\relax
  \advance\!countA \!countB
  \if -\!sign
    \!countA=-\!countA
  \fi
  #3=\!countA
  \ignorespaces}

\def\!scaledown#1by#2to#3{%
  \!countA=#1\relax
  \ifnum \!countA<0 
    \def\!sign{-}
    \!countA=-\!countA
  \else
    \def\!sign{}%
  \fi
  \!countB=\!countA
  \divide\!countB #2\relax
  \!countC=\!countB
    \multiply\!countC #2\relax
  \advance \!countA -\!countC
  \edef#3{\!sign\the\!countB.}
  \!countC=\!countA 
  \ifnum\!countC=0 
    \!countC=1
  \fi
  \multiply\!countC 10
  \!loop \ifnum #2>\!countC
    \edef#3{#3\!zero}%
    \multiply\!countC 10
  \repeat
  \edef#3{#3\the\!countA}
  \ignorespaces}

\def\!placetickvalues{%
  \advance\!offset \tickstovaluesleading
  \if!xswitch
    \setbox\!boxA=\hbox{%
      \def\\##1##2{%
        \!dimenput {##2} [B] (##1,\!axisylevel)}%
      \beginpicture 
        \!LTlist
      \endpicturesave <\!Xsave,\!Ysave>}%
    \!dimenA=\!axisylevel
      \advance\!dimenA -\!Ysave
      \advance\!dimenA \!tickysign\!offset
      \if -\!tickysign
        \advance\!dimenA -\ht\!boxA
      \else
        \advance\!dimenA  \dp\!boxA
      \fi
    \advance\!offset \ht\!boxA 
      \advance\!offset \dp\!boxA
    \!dimenput {\box\!boxA} [Bl] <\!Xsave,\!Ysave> (\!zpt,\!dimenA)
  \else
    \setbox\!boxA=\hbox{%
      \def\\##1##2{%
        \!dimenput {##2} [r] (\!axisxlevel,##1)}%
      \beginpicture 
        \!LTlist
      \endpicturesave <\!Xsave,\!Ysave>}%
    \!dimenA=\!axisxlevel
      \advance\!dimenA -\!Xsave
      \advance\!dimenA \!tickxsign\!offset
      \if -\!tickxsign
        \advance\!dimenA -\wd\!boxA
      \fi
    \advance\!offset \wd\!boxA
    \!dimenput {\box\!boxA} [Bl] <\!Xsave,\!Ysave> (\!dimenA,\!zpt)
  \fi}

\normalgraphs
\catcode`!=12 


 
\catcode`@=11 \catcode`!=11
  
\let\!pictexendpicture=\endpicture 
\let\!pictexframe=\frame
\let\!pictexlinethickness=\linethickness
\let\!pictexmultiput=\multiput
\let\!pictexput=\put

\def\beginpicture{%
  \setbox\!picbox=\hbox\bgroup%
  \let\endpicture=\!pictexendpicture
  \let\frame=\!pictexframe
  \let\linethickness=\!pictexlinethickness
  \let\multiput=\!pictexmultiput
  \let\put=\!pictexput
  \let\input=\@@input   
  \!xleft=\maxdimen  
  \!xright=-\maxdimen
  \!ybot=\maxdimen
  \!ytop=-\maxdimen}

\let\frame=\!latexframe

\let\pictexframe=\!pictexframe

\let\linethickness=\!latexlinethickness
\let\pictexlinethickness=\!pictexlinethickness

\let\\=\@normalcr
\catcode`@=12 \catcode`!=12

\newtheorem{thm}{Theorem}[section]
\newtheorem{lem}[thm]{Lemma}

\newtheorem{thm-con}[thm]{Theorem-Conjecture}
\numberwithin{equation}{section}

\theoremstyle{definition}

\allowdisplaybreaks

\newcommand{\f}{\Bbb F}


\begin{document}

\title[a conjecture on permutation rational functions]{on a conjecture on permutation rational functions over finite fields}

\author[Daniele Bartoli]{Daniele Bartoli}
\address{Dipartimento di Matematica e Informatica, Universit\`a degli Studi di Perugia, Italy} 
\email{daniele.bartoli@unipg.it}

\author[Xiang-dong Hou]{Xiang-dong Hou}
\address{Department of Mathematics and Statistics,
University of South Florida, Tampa, FL 33620, USA}
\email{xhou@usf.edu}


\keywords{finite field, Lang-Weil bound, permutation, rational function}

\subjclass[2010]{11R58, 11T06, 11T55, 14H05}

\begin{abstract} 

Let $p$ be a prime and $n$ be a positive integer, and consider $f_b(X)=X+(X^p-X+b)^{-1}\in \f_p(X)$, where $b\in\f_{p^n}$ is such that $\text{Tr}_{p^n/p}(b)\ne 0$. It is known that (i) $f_b$ permutes $\f_{p^n}$ for $p=2,3$ and all $n\ge 1$; (ii) for $p>3$ and $n=2$, $f_b$ permutes $\f_{p^2}$ if and only if $\text{Tr}_{p^2/p}(b)=\pm 1$; and (iii) for $p>3$ and $n\ge 5$, $f_b$ does not permute $\f_{p^n}$. It has been conjectured that for $p>3$ and $n=3,4$, $f_b$ does not permute $\f_{p^n}$. We prove this conjecture for sufficiently large $p$.
 
\end{abstract}

\maketitle

\section{Background}

Let $\f_q$ denote the finite field with $q$ elements. Polynomials over $\f_q$ that permute $\f_q$, called {\em permutation polynomials} (PPs) of $\f_q$, have been extensively studied in the theory and applications of finite fields. Recently, permutation rational functions (PRs) of finite fields also attracted considerable attention. There are a number of reasons for studying PRs. Certain types of PPs of high degree can be reduced to PRs of low degree; this approach has allowed people to solve numerous questions about PPs \cite{Bartoli-FFA-2018, Bartoli-FFA-2020, Cao-Hou-Mi-Xu-FFA-2020, Hou-CC-2019, Hou-arXiv1906.07240, Hou-Tu-Zeng-FFA-2020, Li-Qu-Li-Fu-AA-2018, Tu-Zeng-FFA-2018, Tu-Zeng-Li-Helleseth-FFA-2018, Wang-2019, Zieve-PAMS-2009}. Oftentimes, PRs reveal phenomena that are not present in PPs; understanding these phenomena requires methods that are different from those in traditional approaches to PPs.

This paper concerns a conjecture on PRs of the type
\[
f_b(X)=X+\frac 1{X^p-X+b}\in\f_p(X)
\]
of $\f_{p^n}$, where $p$ is a prime, $n$ is a positive integer, and $b\in\f_{p^n}$ is such that $\text{Tr}_{p^n/p}(b)\ne 0$. In \cite{Yuan-Ding-Wang-Pieprzyk-FFA-2008}, Yuan et al. proved that for $p=2,3$ and all $n\ge 1$, $f_b$ is a PR of $\f_{p^n}$. Recently, it was shown in \cite{Hou-Sze-arXiv:1910.11989} that for $p>3$ and $n\ge 5$, $f_b$ is not a PR of $\f_{p^n}$, and for $p>3$ and $n=2$, $f_b$ is a PR of $\f_{p^2}$ if and only if $\text{Tr}_{p^2/p}(b)=\pm 1$. Based on computer search, it was conjectured in \cite{Hou-Sze-arXiv:1910.11989} that for $p>3$ and $n=3,4$, $f_b$ is not a PR of $\f_{p^n}$. We will prove this conjecture for sufficiently large $p$. Our approach relies on the Lang-Weil bound on the number of zeros of absolutely irreducible polynomials over finite  fields. The main technical ingredient of our proof is a claim that that a certain polynomial of degree 18 in $\f_p[Y_1,Y_2,Y_3]$ has a cyclic absolutely irreducible factor in $\f_p[Y_1,Y_2,Y_3]$ and a claim that a certain polynomial of degree 46 in $\f_p[Y_1,Y_2,Y_3,Y_4]$ has a cyclic absolutely irreducible factor in $\f_p[Y_1,Y_2,Y_3,Y_4]$.

Throughout the paper, $\overline\f_q$ denotes the algebraic closure of $\f_q$. For $f\in\f_q[X_1,\dots,X_n]$, define 
\[
V_{\f_q^n}(f)=\{(x_1,\dots,x_n)\in\f_q^n:f(x_1,\dots,x_n)=0\}.
\]
$f$ is said to be {\em absolutely irreducible} if it is irreducible in $\overline\f_p[X_1,\dots,X_n]$. The resultant of two polynomials $f(X)$ and $g(X)$ in $X$ is denoted by $\text{Res}(f,g;X)$.

\section{Cyclic Shift and the Forbenius}

For $\sigma\in\text{Aut}(\f_q)$ and $f\in\f_q[X_1,\dots,X_n]$, let $\sigma(f)$ denote the resulting polynomial by applying $\sigma$ to the coefficients of $f$; this defines an action of $\text{Aut}(\f_q)$ on $\f_q[X_1,\dots, X_n]$. Let $\rho$ be the cyclic shift on the indeterminates $X_1,\dots,X_n$: $\rho(X_1,\dots,X_n)=(X_2,,X_3,\dots,X_n, X_1)$. For $f\in\f_q[X_1,\dots,X_n]$ and $\rho^i\in\langle\rho\rangle$, let $f^{\rho^i}=f(\rho^i(X_1,\dots,X_n))$; this gives an action of $\langle\rho\rangle$ on $\f_q[X_1,\dots,X_n]$. A polynomial $f\in\f_q[X_1,\dot,X_n]$ is called {\em cyclic} if $f^\rho=f$ and is called {\em pseudo-cyclic} if $f^\rho=cf$ for some $n$th unity in $\f_q$.

For $z\in\f_{q^n}$, $z,z^q,\dots,z^{q^{n-1}}$ form a normal basis of $\f_{q^n}$ over $\f_q$ if and only if the Moore matrix of $z$,
\[
M(z)=\left[
\begin{matrix}
z&z^q&\cdots&z^{q^{n-1}}\cr
z^q&z^{q^2}&\cdots&z\cr
\vdots&\vdots&&\vdots\cr
z^{q^{n-1}}&z&\cdots&z^{q^{n-2}}
\end{matrix}\right],
\]
is invertible. An $n\times n$ matrix $A$ over $\f_{q^n}$ is of the form $M(z)$ for some $z\in\f_{q^n}$ if and only if $\sigma(A)=CA=AC^{-1}$, where $\sigma(\ )=(\ )^q$ is the Frobenius map of $\f_{q^n}/\f_q$, $\sigma(A)$ is the result of entry-wise action of $\sigma$ on $A$, and 
\[
C=\left[
\begin{matrix}
0&1&0&\cdots&0\cr
0&0&1&\cdots&0\cr
\vdots&\vdots&\vdots&\ddots&\vdots\cr
0&0&0&\cdots&1\cr
1&0&0&\cdots&0
\end{matrix}\right].
\]
From this, it is easy to see that if $M(z)$ is invertible, then $M(z)^{-1}=M(w)$ for some $w\in\f_{q^n}$.

\begin{lem}\label{L2.1}
Let $z\in\f_{q^n}$ be such that $\det M(z)\ne 0$. Let $f\in\overline\f_q[X_1,\dots,X_n]$ and $g=f((X_1,\dots,X_n)M(z))$. Then
\begin{itemize}
\item[(i)] $f$ is cyclic if and only if $g$ is cyclic;
\item[(ii)] $f\in\f_q[X_1,\dots,X_n]$ and is cyclic if and only if $g\in\f_q[X_1,\dots,X_n]$ and is cyclic.
\end{itemize}
\end{lem}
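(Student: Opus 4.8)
The plan is to translate everything into linear algebra over $\overline\f_q$ and to reduce both claims to the two matrix identities recorded above, namely $CM(z)=M(z)C^{-1}$ and $\sigma(M(z))=CM(z)=M(z)C^{-1}$. Writing $X=(X_1,\dots,X_n)$ as a row vector, the first thing I would check is that the cyclic shift is right multiplication by $C^{-1}=C^{\mathsf T}$: a direct computation gives $XC^{-1}=(X_2,X_3,\dots,X_n,X_1)=\rho(X)$, so that $f^\rho(X)=f(XC^{-1})$ for every $f$. Consequently $f$ is cyclic precisely when $f(XC^{-1})=f(X)$, equivalently (replacing $X$ by $XC$) when $f(XC)=f(X)$. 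From $CM(z)=M(z)C^{-1}$ I will also use the companion identity $C^{-1}M(z)=M(z)C$, obtained by multiplying on the left by $C^{-1}$ and on the right by $C$.

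For (i), I would simply compute the action of $\rho$ on $g(X)=f(XM(z))$. Using $\rho(X)=XC^{-1}$ and then $C^{-1}M(z)=M(z)C$,
\[
g^\rho(X)=g(XC^{-1})=f(XC^{-1}M(z))=f(XM(z)C)=f\bigl((XM(z))C\bigr).
\]
Since $M(z)$ is invertible, $Y=XM(z)$ is an invertible linear change of variables, so as polynomials $g^\rho=g$ holds if and only if $f(YC)=f(Y)$, which by the previous paragraph is exactly the condition that $f$ be cyclic. This proves (i) as a direct equivalence, with no need to treat the two directions separately.

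For (ii), the cyclicity half is already contained in (i), so only the field of definition must be tracked. I would use that the coefficient-wise Frobenius $\sigma$ commutes with polynomial substitution in the sense $\sigma(g)(X)=\sigma(f)(X\,\sigma(M(z)))$, which is immediate from expanding $f(XM(z))$ monomial by monomial, since the coefficients of $g$ are fixed integer-coefficient polynomial expressions in the coefficients of $f$ and the entries of $M(z)$. Assuming $f\in\f_q[X_1,\dots,X_n]$ is cyclic, we have $\sigma(f)=f$ and $\sigma(M(z))=M(z)C^{-1}$, hence
\[
\sigma(g)(X)=f(XM(z)C^{-1})=f\bigl((XM(z))C^{-1}\bigr)=f(XM(z))=g(X),
\]
the last step using cyclicity of $f$. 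Thus every coefficient of $g$ is fixed by $\sigma$ and so lies in $\f_q$, giving $g\in\f_q[X_1,\dots,X_n]$; together with (i) this is the forward implication. For the converse I would invoke the remark that $M(z)^{-1}=M(w)$ for some $w\in\f_{q^n}$ and write $f(X)=g(XM(w))$; applying the forward implication just proved, now with the invertible Moore matrix $M(w)$ in place of $M(z)$ and $g$ in place of $f$, shows that $g\in\f_q[X_1,\dots,X_n]$ cyclic forces $f\in\f_q[X_1,\dots,X_n]$ cyclic.

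I do not expect a genuine obstacle; the content is bookkeeping. The one place to be careful is orientation: getting the correspondence $\rho\leftrightarrow C^{-1}$ right (left versus right shift, and whether to multiply by $C$ or $C^{-1}$), and then choosing which form of the identity — $CM(z)=M(z)C^{-1}$ versus $\sigma(M(z))=M(z)C^{-1}$ — to apply so that each stray factor $C^{\pm1}$ lands on the outside, where cyclicity (for (i)) or Frobenius invariance of $f$ (for (ii)) can absorb it. The only step that warrants an explicit word of justification is that $\sigma$ distributes over the substitution as claimed, and that follows at once from the monomial expansion of $f(XM(z))$.
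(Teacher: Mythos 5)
Your proposal is correct and follows essentially the same route as the paper: both rest on the identities $CM(z)=M(z)C^{-1}$ and $\sigma(M(z))=M(z)C^{-1}$, with the forward implications verified by direct substitution and the converses reduced to them via the inverse Moore matrix $M(z)^{-1}=M(w)$ (your part (i) folds the two directions into one equivalence by invoking invertibility of the substitution $X\mapsto XM(z)$, which is only a cosmetic difference). Your explicit justification that $\sigma$ distributes over the substitution, and the orientation check $\rho(X)=XC^{-1}$, are points the paper leaves implicit but are handled correctly.
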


\begin{proof}
Since $f=g((X_1,\dots,X_n)M(w))$, where $M(w)=M(z)^{-1}$, we only have to prove the ``only if'' part in both (i) and (ii).

\medskip
(i) ($\Rightarrow$) We have 
\begin{align*}
g((X_1,\dots,X_n)C)\,&=f((X_1,\dots,X_n)C M(z))\cr
&=f((X_1,\dots,X_n) M(z)C^{-1})\cr
&=f((X_1,\dots,X_n) M(z))\kern5em\text{(since $f$ is cyclic)}\cr
&=g.
\end{align*}

(ii) ($\Rightarrow$) We have 
\begin{align*}
\sigma(g)\,&=\sigma(f((X_1,\dots,X_n) M(z)) )\cr
&=f((X_1,\dots,X_n) \sigma(M(z)))\kern5em\text{(since $f\in\f_q[X_1,\dots,X_n]$)}\cr
&=f((X_1,\dots,X_n) M(z)C^{-1})\cr
&=g.
\end{align*}
\end{proof}

\section{The Case $n=3$}

For $n=3$, we have the following theorem.

\begin{thm}\label{T3.1}
Let $p\ge 1734097$ be a prime and $b\in\f_{p^3}$ be such that $\text{\rm Tr}_{p^3/p}(b)\ne 0$. Then $f_b$ is not a PR of $\f_{p^3}$. (Note: $1734097$ is the $130492$th prime.)
\end{thm}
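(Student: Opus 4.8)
The plan is to show that $f_b$ fails to be injective on $\f_{p^3}$. Since $\text{Tr}_{p^3/p}(b)\ne 0$, the Artin--Schreier polynomial $X^p-X+b$ has no root in $\f_{p^3}$, so $f_b$ is a genuine map $\f_{p^3}\to\f_{p^3}$ and ``not a PR'' is equivalent to the existence of distinct $x,y\in\f_{p^3}$ with $f_b(x)=f_b(y)$. Writing $t=x-y$, $u=x^p-x+b$, $v=y^p-y+b$ and using $u-v=(x-y)^p-(x-y)=t^p-t$, the collision condition $x+u^{-1}=y+v^{-1}$ becomes, after clearing denominators, $t\,uv=t^p-t$; here $u,v\ne 0$ automatically, because $\text{Tr}_{p^3/p}(u)=\text{Tr}_{p^3/p}(v)=\text{Tr}_{p^3/p}(b)\ne0$. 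Moreover $f_{b'}$ is affinely conjugate to $f_b$ whenever $\text{Tr}_{p^3/p}(b')=\text{Tr}_{p^3/p}(b)$ (if $b'-b=w^p-w$ then $f_{b'}(X)=f_b(X+w)-w$), so I may assume the data depends only on $\tau:=\text{Tr}_{p^3/p}(b)$. Since $u=v+(t^p-t)$, the whole problem is governed by the pair $(t,v)\in\f_{p^3}^2$ subject to the single $\f_{p^3}$-relation $t\,v\,(v+t^p-t)=t^p-t$ together with the solvability constraint $\text{Tr}_{p^3/p}(v)=\tau$, which is exactly what allows $y$, and then $x=y+t$, to be recovered.

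The next step is to make the Frobenius linear. Fixing a normal basis $z,z^p,z^{p^2}$ of $\f_{p^3}/\f_p$ and writing $t,v$ in these coordinates, the map $(\ )^p$ becomes the cyclic shift $\rho$ on the coordinate triples, so the relation $t\,v\,(v+t^p-t)=t^p-t$ turns into a system of three polynomials over $\f_p$ of bounded degree that is cyclic in the sense of Section~2, while the trace constraint becomes one linear equation. Because the relation is quadratic in $v$, I would eliminate $v$ (solving for it and imposing $\text{Tr}_{p^3/p}(v)=\tau$ after clearing the resulting radicals), arriving at a single cyclic polynomial $F\in\f_p[Y_1,Y_2,Y_3]$ of degree $18$, where $(Y_1,Y_2,Y_3)$ are the normal-basis coordinates of $t$ and the coefficients depend on $\tau$. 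By construction every $\f_p$-point of $V_{\f_p^3}(F)$ lying off the line $\{Y_1=Y_2=Y_3\}$ (i.e.\ with $t\notin\f_p$) produces a genuine collision $x\ne y$, and conversely.

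The heart of the argument, and the step I expect to be the main obstacle, is the Claim that $F$ has a \emph{cyclic absolutely irreducible factor} $G\in\f_p[Y_1,Y_2,Y_3]$. Absolute irreducibility is what Lang--Weil requires, while the cyclic property --- through Lemma~\ref{L2.1} and the Moore-matrix substitution relating cyclic polynomials to $\f_p$-rational ones --- is the mechanism that lets one take this factor to be defined over $\f_p$ rather than only over an extension. Establishing the Claim cannot be done by factoring a single polynomial, since $F$ varies with $\tau$ and with $p$; instead I would treat $\tau$ as generic, analyze $F$ over a field of characteristic $0$, examining its leading form and singular locus to exhibit one absolutely irreducible component of dimension $2$, and then control the finitely many exceptional pairs $(p,\tau)$ that specialization could spoil. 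The order-$3$ action of $\rho$ constrains the possible factorization types, since the absolutely irreducible factors are permuted by $\rho$ in orbits of size $1$ or $3$, and this is what makes a cyclic factor detectable.

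Finally I would invoke an explicit form of the Lang--Weil bound (Cafure--Matera): for an absolutely irreducible $G\in\f_p[Y_1,Y_2,Y_3]$ of degree $\delta$,
\[
\bigl|\,|V_{\f_p^3}(G)|-p^2\,\bigr|\le (\delta-1)(\delta-2)\,p^{3/2}+5\,\delta^{13/3}\,p.
\]
With $\delta=18$, so $(\delta-1)(\delta-2)=272$, and after subtracting the $O(p)$ contribution of the excluded line $\{Y_1=Y_2=Y_3\}$ and of any other lower-dimensional bad locus, the count $|V_{\f_p^3}(G)|$ exceeds the number of bad points precisely when $p$ is larger than the root of $p-272\sqrt p-5\cdot 18^{13/3}=0$, which is the source of the threshold $p\ge 1734097$. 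Hence for such $p$ there is a good $\f_p$-point, i.e.\ distinct $x,y\in\f_{p^3}$ with $f_b(x)=f_b(y)$, so $f_b$ is not a PR of $\f_{p^3}$.
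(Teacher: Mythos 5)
Your overall architecture matches the paper's: reduce non-permutation to a collision $f_b(x)=f_b(y)$, convert this to solvability of a quadratic in $v=y^p-y+b$ whose discriminant must admit a square root $\Delta$ with $\mathrm{Tr}_{p^3/p}(\Delta)=2\,\mathrm{Tr}_{p^3/p}(b)$, linearize the Frobenius (the paper uses conjugate variables and then the Moore-matrix substitution; your normal-basis coordinates are the same thing via Lemma~\ref{L2.1}), eliminate to reach a single cyclic polynomial of degree $18$, produce a cyclic absolutely irreducible factor over $\f_p$, and count points with the Cafure--Matera form of Lang--Weil after removing an $O(p)$ bad locus. All of that is sound, including the observation that $f_{b'}(X)=f_b(X+w)-w$ when $b'-b=w^p-w$, so that only $\tau=\mathrm{Tr}_{p^3/p}(b)$ matters.

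The genuine gap is in the step you yourself flag as the heart: the existence of a cyclic absolutely irreducible factor in $\f_p[Y_1,Y_2,Y_3]$. Your plan --- treat $\tau$ as generic, prove the claim in characteristic $0$, then ``control the finitely many exceptional pairs $(p,\tau)$'' --- cannot deliver the theorem. Specialization arguments of this kind control failure by the nonvanishing modulo $p$ of some fixed nonzero $h(\tau)\in\Bbb Z[\tau]$; this leaves up to $\deg h$ exceptional values of $\tau$ in $\f_p^*$ for \emph{every} prime $p$ (so infinitely many exceptional pairs in total), while the theorem must cover every $b$ with $\mathrm{Tr}_{p^3/p}(b)\ne 0$. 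It also forfeits the explicit threshold $p\ge 1734097$, which is meaningful only if the factor exists uniformly in $\tau$; and even where specialization preserves absolute irreducibility, you would still owe an argument that the specialized factor is cyclic and has coefficients in $\f_p$. The paper's Lemma~\ref{L3.2} is exactly the uniform substitute: for every $t\ne 0$ it writes $G=a_8(Y_1,Y_2)Y_3^8+\cdots$ with $a_8=16Y_1^2\alpha(Y_1,Y_2,t)\alpha(Y_1,Y_2,-t)$, proves $\alpha(Y_1,Y_2,\pm t)$ irreducible by showing its discriminant is a non-square in $\overline\f_p[Y_2]$, and then forces some factor to be Frobenius-fixed and cyclic by degree counting against the $\rho$-orbit structure (a factor moved by $\rho$ would, together with its orbit, exceed total degree $18$ or force $\alpha_1^2\mid a_8$). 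Your remark that $\rho$-orbits of factors have size $1$ or $3$ is correct but is not by itself an argument that a size-$1$ orbit exists. Secondarily, your ``other lower-dimensional bad locus'' needs substance: one must check that the denominator locus (the paper's $Q$) shares no component with $G$ --- the paper does this via the resultant computation giving $\gcd(G,Q)=1$ --- before the $O(p)$ intersection bound you invoke is legitimate.
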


\begin{proof}
We have
\[
f_b(X+Y)-f_b(X)=\frac{Y(Z(X)^2+(Y^p-Y)Z(X)+1-Y^{p-1})}{Z(X)Z(X+Y)},
\]
where
\[
Z(X)=X^p-X+b.
\]
Let 
\[
F(X,Y)=Z(X)^2+(Y^p-Y)Z(X)+1-Y^{p-1}\in\f_{p^3}[X,Y].
\]
It suffices to show that there exists $(x,y)\in\f_{p^3}^2$ with $y\ne 0$ such that $F(x,y)=0$, i.e., such that
\begin{equation}\label{3.1}
z^2+(y^p-y)z+1-y^{p-1}=0,
\end{equation}
where $z=x^p-x+b$. The solution of \eqref{3.1} for $z$ is
\begin{equation}\label{3.2}
z=\frac 12(-(y^p-y)+\Delta),
\end{equation}
where
\begin{align}\label{3.3}
\Delta^2\,&=(y^p-y)^2-4(1-y^{p-1})\\
&=(y^{p-1}-1)(y^2(y^{p-1}-1)+4)\cr
&=y^{2p}+y^2-2y^{1+p}+4y^{p-1}-4.\nonumber
\end{align}
Therefore, it suffices to show that there exist $\Delta,y\in\f_{p^3}$, with $y\ne 0$ and $\text{Tr}_{p^3/p}(\Delta)=2\text{Tr}_{p^3/p}(b)=:t$, satisfying
\begin{equation}\label{3.4}
\Delta^2=y^{2p}+y^2-2y^{1+p}+4y^{p-1}-4.
\end{equation}
Assume for the time being that we already have $\Delta,y\in\f_{p^3}$, with $y\ne 0$ and $\text{Tr}_{p^3/p}(\Delta)=t$, that satisfy \eqref{3.4}. Let $y=y_1,y_2=y^p, y_3=y^{p^2}$ and $\Delta_1=\Delta, \Delta_2=\Delta^p, \Delta_3=\Delta^{p^2}$. Then we have
\begin{gather}\label{3.5}
\Delta_1^2=y_1^2+y_2^2-2y_1y_2+4\frac{y_2}{y_1}-4,\\ \label{3.6}
\Delta_2^2=y_2^2+y_3^2-2y_2y_3+4\frac{y_3}{y_2}-4,\\ \label{3.7}
\Delta_3^2=y_3^2+y_1^2-2y_3y_1+4\frac{y_1}{y_3}-4, \\ \label{3.8}
\Delta_1+\Delta_2+\Delta_3=t.
\end{gather}
From \eqref{3.8} we can express $\Delta_1$ in terms of $\Delta_1^2$, $\Delta_2^2$, $\Delta_3^2$ and $t$ through the following calculation:
\begin{gather}
(t-\Delta_1)^2=(\Delta_2+\Delta_3)^2,\cr
t^2+\Delta_1^2-\Delta_2^2-\Delta_3^2-2t\Delta_1=2\Delta_2\Delta_3,\cr
(t^2+\Delta_1^2-\Delta_2^2-\Delta_3^2)^2+4t^2\Delta_1^2-4t\Delta_1(t^2+\Delta_1^2-\Delta_2^2-\Delta_3^2)=4\Delta_2^2\Delta_3^2,\cr
\label{3.9}
\Delta_1=\frac{(t^2+\Delta_1^2-\Delta_2^2-\Delta_3^2)^2+4t^2\Delta_1^2-4\Delta_2^2\Delta_3^2 } {4t(t^2+\Delta_1^2-\Delta_2^2-\Delta_3^2)},
\end{gather}
provided the denominator is nonzero. Using \eqref{3.5} -- \eqref{3.7}, we can write \eqref{3.9} as
\begin{equation}\label{3.11}
\Delta_1=\frac{P(y_1,y_2,y_3)}{4ty_1y_2y_3Q(y_1,y_2,y_3)},
\end{equation}
where
\begin{equation}\label{3.12}
P(Y_1,Y_2,Y_3)=16Y_1^4Y_2^2+32Y_1^3Y_2^2Y_3-\cdots-16Y_1Y_2^3Y_3^4,
\end{equation}
\begin{equation}\label{3.13}
Q(Y_1,Y_2,Y_3)=-4Y_1^2Y_2+4Y_1Y_2Y_3+\cdots-2Y_1Y_2Y_3^3.
\end{equation}
It follows that 
\begin{gather}\label{3.14}
\Delta_2=\frac{P(y_2,y_3,y_1)}{4ty_1y_2y_3Q(y_2,y_3,y_1)},\\
\label{3.15}
\Delta_3=\frac{P(y_3,y_1,y_2)}{4ty_1y_2y_3Q(y_3,y_1,y_2)}.
\end{gather}
Using \eqref{3.11}, equation \eqref{3.5} becomes
\begin{equation}\label{3.17}
\frac{G(y_1,y_2,y_3)}{16t^2 y_1^2y_2^2y_3^2Q(y_1,y_2,y_3)^2}=0,
\end{equation}
and using \eqref{3.11}, \eqref{3.14} and \eqref{3.15}, equation \eqref{3.8} becomes
\begin{equation}\label{3.16}
\frac{G(y_1,y_2,y_3)}{4ty_1y_2y_3Q(y_1,y_2,y_3)Q(y_2,y_3,y_1)Q(y_3,y_1,y_2)}
=0,
\end{equation}
where $G(Y_1,Y_2,Y_3)\in\f_p[Y_1,Y_2,Y_3]$ is a cyclic polynomial of degree 18. More precisely,
\begin{equation}\label{3.18}
G=g_{18}+g_{16}+g_{14}+g_{12},
\end{equation}
where $g_i\in\f_p[Y_1,Y_2,Y_3]$ is homogeneous of degree $i$:
\begin{align}\label{3.19}
&g_{18}=-64t^2Y_1^4Y_2^4Y_3^4(Y_1-Y_2)^2(Y_2-Y_3)^2(Y_3-Y_1)^2,\\ \label{3.20}
&g_{16}=16Y_1^2Y_2^2Y_3^2(16Y_1^6Y_2^4-32Y_1^5Y_2^5+\cdots+16Y_2^4Y_3^6),\\ \label{3.21}
&g_{14}=8Y_1Y_2Y_3(64Y_1^7Y_2^4-64Y_1^6Y_2^5-\cdots-64Y_1^2Y_2^2Y_3^7),\\ \label{3.22}
&g_{12}=256Y_1^8Y_2^4+1024Y_1^7Y_2^4Y_3-\cdots +256Y_1^4Y_3^8.
\end{align}

We will show that there exists $y\in\f_{p^3}$ such that $G(y,y^p,y^{p^2})=0$ but $Q(y,y^p,y^{p^2})\ne 0$. Once this is done, the proof of the theorem is completed as follows: Clearly, $y\ne 0$. Let $y_1=y,y_2=y^p, y_3=y^{p^2}$ and let $\Delta_1,\Delta_2,\Delta_3$ be given by \eqref{3.11}, \eqref{3.14} and \eqref{3.15}. Then \eqref{3.5} and \eqref{3.8} hold. Let $\Delta=\Delta_1$. By \eqref{3.8}, $\text{Tr}_{p^3/p}(\Delta)=t$; by \eqref{3.5}, $\Delta$ and $y$ satisfy \eqref{3.4}. 

Choose $z\in\f_{p^3}$ such that $M(z)$ is invertible and let
\[
G_1=G((Y_1,Y_2,Y_3)M(z)).
\]
By Lemma~\ref{L3.2} below, $G$ has a cyclic absolutely irreducible factor $d\in\f_p[Y_1,Y_2,Y_3]$. Let $d_1=d((Y_1,Y_2,Y_3)M(z))$. Then $d\mid G$ implies that $d_1\mid G_1$, Lemma~\ref{L2.1} implies that $d_1\in\f_p[Y_1,Y_2,Y_3]$ and is cyclic, and the absolute irreducibility of $d$ implies the absolute irreducibility of $d_1$. The Lang-Weil bound \cite{Lang-Weil-AJM-1954} states that
\[
|V_{\f_p^3}(d_1)|=p^2+O(p^{3/2})\qquad \text{as}\ p\to\infty.
\]
More precisely, by \cite[Theorem~5.2]{Cafure-Matera-FFA-2006},
\begin{equation}\label{3.23}
|V_{\f_p^3}(d_1)|\ge p^2-(18-1)(18-2)p^{3/2}-5\cdot 18^{13/3}p=p^2-272p^{3/2}-5\cdot 18^{13/3}p.
\end{equation}
We find that 
\[
\text{Res}(G,Q;Y_3)=2^{16}Y_1^{14}Y_2^8(-256Y_1^4-\cdots-8t^2Y_1^4Y_2^6)^2(256Y_1^3+\cdots+4t^2Y_1^2Y_2^7)^2\ne 0.
\]
Hence $\text{gcd}(G,Q)=1$. Let $Q_1=Q((Y_1,Y_2,Y_3)M(z))$. It follows that $\text{gcd}(G_1,Q_1)=1$. By \cite[Lemma~2.2]{Cafure-Matera-FFA-2006},
\begin{equation}\label{3.24}
|V_{\f_p^3}(G_1)\cap V_{\f_p^3}(Q_1)|\le 18^2p.
\end{equation}
Therefore,
\begin{align*}
|V_{\f_p^3}(G_1)\setminus V_{\f_p^3}(Q_1)|\,&=|V_{\f_p^3}(G_1)|-|V_{\f_p^3}(G_1)\cap V_{\f_p^3}(Q_1)|\cr
&\ge |V_{\f_p^3}(d_1)|-18^2p\cr
&\ge p^2-272p^{3/2}-(5\cdot 18^{13/3}+18^2)p\cr
&=p(p-272p^{1/2}-(5\cdot 18^{13/3}+18^2) )\cr
&>0
\end{align*}
since
\[
p\ge 1734097>1734081\approx\frac 14\bigl[271+(272^2+4(5\cdot 18^{13/3}+18^2))^{1/2} \bigr]^2.
\]
Let $(a_1,a_2,a_3)\in V_{\f_p^3}(G_1)\setminus V_{\f_p^3}(Q_1) $ and let $y=a_1z+a_2z^q+a_3z^{q^2}\in\f_{p^3}$. Then $G(y,y^p,y^{p^2})=0$ but $Q(y,y^p,y^{p^2})\ne 0$. The proof is complete.
\end{proof}

\begin{lem}\label{L3.2}
The polynomial $G$ in \eqref{3.18} has a cyclic absolutely irreducible factor in $\f_p[Y_1,Y_2,Y_3]$.
\end{lem}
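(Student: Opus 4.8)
The plan is to pass to the factorization of $G$ into absolutely irreducible factors over $\overline{\f_p}$ and to play the order-$3$ group $\langle\rho\rangle$ off against the Frobenius. Write $G=c\prod_i d_i^{e_i}$ with the $d_i$ distinct and absolutely irreducible. Since $G\in\f_p[Y_1,Y_2,Y_3]$ is cyclic, both the Frobenius $\sigma$ and $\rho$ permute the $d_i$ up to nonzero scalars, and the two actions commute. What I want is a single $d_i$ fixed (up to scalar) by \emph{both} $\sigma$ and $\rho$: being $\sigma$-fixed lets me normalize it into $\f_p[Y_1,Y_2,Y_3]$, and being $\rho$-fixed makes it cyclic. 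Note that $\rho$-fixedness automatically upgrades from pseudo-cyclic to cyclic here: if $d_i^\rho=\lambda d_i$ then the top-degree form of $d_i$ satisfies the same relation, but that top form divides $g_{18}$ and hence is a product of the linear forms $Y_1,Y_2,Y_3,Y_1-Y_2,Y_2-Y_3,Y_3-Y_1$, which $\rho$ merely permutes without scaling; a $\rho$-stable such product is genuinely $\rho$-invariant, forcing $\lambda=1$. So the whole lemma reduces to producing one absolutely irreducible factor fixed by $\sigma$ and $\rho$, equivalently to ruling out that every absolutely irreducible factor lies in a $\langle\rho\rangle$-orbit of size $3$.

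The first concrete step is to extract the constraints imposed by the top-degree form, using that the top form of a product is the product of the top forms, so that $g_{18}=\prod_i(\text{top form of }d_i)^{e_i}$. From \eqref{3.19},
\[
g_{18}=-64t^2\,(Y_1Y_2Y_3)^4\,\bigl[(Y_1-Y_2)(Y_2-Y_3)(Y_3-Y_1)\bigr]^2,
\]
and the six linear forms occurring here fall into exactly two $\rho$-orbits, namely $\{Y_1,Y_2,Y_3\}$ with multiplicity $4$ and $\{Y_1-Y_2,Y_2-Y_3,Y_3-Y_1\}$ with multiplicity $2$. I would use this to bound the degrees and admissible top forms of the $d_i$; in particular, the top form of any $\sigma$- and $\rho$-stable factor must be one of the genuinely invariant products $(Y_1Y_2Y_3)^k\bigl[(Y_1-Y_2)(Y_2-Y_3)(Y_3-Y_1)\bigr]^m$ with $0\le k\le4$ and $0\le m\le2$. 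A short check at the coordinate planes $Y_i=0$ (where one sees from $g_{12}$ that $G$ is not divisible by any $Y_i$) and along the $\rho$-fixed diagonal $Y_1=Y_2=Y_3$ further narrows the bookkeeping of how the two orbits may be distributed among the factors.

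The crux, and the step I expect to be the hardest, is to actually exclude the ``three genuinely conjugate factors'' scenario and to pin down the invariant factor, uniformly in $p$. Since $t=2\,\text{Tr}_{p^3/p}(b)$ is only required to be nonzero, I would treat $t$ as a parameter and determine the factorization type of $G$ over $\overline{\mathbb{Q}(t)}$ (or in $\mathbb{Z}[t][Y_1,Y_2,Y_3]$), then transfer it to $\overline{\f_p}$ for all $p$ outside the finite set of primes dividing the relevant resultants and discriminants; the explicit threshold $p\ge1734097$ is exactly the flavour of such a ``good reduction'' bound. To settle the characteristic-zero factorization I would combine the top-form and coordinate-plane constraints above with a finer local analysis of $G$ near the diagonal and near the singular points at infinity coming from the repeated linear factors of $g_{18}$, the goal being to show that the lower forms $g_{16},g_{14},g_{12}$ prevent any splitting of an orbit into three distinct conjugate pieces; in practice this is where one explicit factorization of $G$, computed once and then matched against the symmetry constraints, identifies the cyclic absolutely irreducible factor $d$ and verifies $d^\rho=d$.

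Finally I would assemble the pieces: the factor $d$ so obtained is absolutely irreducible, is cyclic by the top-form argument, and after normalization lies in $\f_p[Y_1,Y_2,Y_3]$, which is precisely the assertion of the lemma. The residual work is purely bookkeeping — confirming the normalization keeps $d$ defined over $\f_p$ and that the stated bound on $p$ avoids the finitely many bad primes of the generic factorization.
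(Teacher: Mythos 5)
Your preliminary reduction is correct and coincides with the paper's own opening observation: since the top-degree form of any factor of $G$ divides $g_{18}$, which is a product of the six linear forms $Y_1,Y_2,Y_3,Y_1-Y_2,Y_2-Y_3,Y_3-Y_1$ that $\rho$ permutes without scaling, every pseudo-cyclic factor of $G$ is automatically cyclic, and the lemma reduces to producing one absolutely irreducible factor fixed by both $\sigma$ and $\rho$. But that is where your proof stops being a proof. The heart of the lemma --- excluding the scenario in which every absolutely irreducible factor lies in a $\rho$-orbit of size $3$ (or a nontrivial $\sigma$-orbit) --- is exactly the step you do not carry out: you defer it to ``one explicit factorization of $G$, computed once and then matched against the symmetry constraints,'' which is never supplied. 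The top-form bookkeeping, the coordinate-plane check, and the diagonal analysis you list do not by themselves rule out the three-conjugate-factors configuration, so the proposal contains no argument for the crucial claim.

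Moreover, the route you propose for filling that gap is structurally inadequate, not merely unfinished. You would factor $G$ over $\overline{\mathbb{Q}(t)}$ with $t$ an indeterminate and transfer the factorization type to $\overline\f_p$ by good reduction. But in the lemma $t=2\,\text{Tr}_{p^3/p}(b)$ is an \emph{arbitrary nonzero element of} $\f_p$, not a fixed rational number: a Bertini--Noether specialization argument produces a nonzero integer polynomial $h(t)$ such that the generic factorization persists whenever $h(t_0)\ne 0$, and for each prime $p$ this leaves up to $\deg h$ exceptional values $t_0\in\f_p^*$ which your argument cannot reach, whereas the lemma must hold for \emph{every} nonzero $t$. (Relatedly, your remark that the threshold $p\ge 1734097$ ``is exactly the flavour of such a good reduction bound'' misreads the paper: that bound comes from the Lang--Weil/Cafure--Matera point count in Theorem 3.1; Lemma 3.2 itself is proved uniformly in $p$ and $t$.) The paper's actual mechanism avoids ever determining the factorization: writing $G=a_8(Y_1,Y_2)Y_3^8+\cdots$, it factors the $Y_3$-leading coefficient as $a_8=16Y_1^2\alpha(Y_1,Y_2,t)\alpha(Y_1,Y_2,-t)$, proves the two quartics $\alpha(Y_1,Y_2,\pm t)$ are absolutely irreducible (their $Y_1$-discriminant is not a square in $\overline\f_p[Y_2]$, using only $t\ne 0$ and $p$ odd), and then tracks the irreducible factors $f,h$ of $G$ whose leading coefficients carry $\alpha(Y_1,Y_2,t)$ and $\alpha(Y_1,Y_2,-t)$ respectively: each is forced to be $\sigma$-fixed (otherwise $f\sigma(f)\mid G$ would force $\alpha(Y_1,Y_2,t)^2\mid a_8$), and if neither were cyclic then $ff^\rho f^{\rho^2}\mid G$ and $hh^\rho h^{\rho^2}\mid G$ would force $\deg f,\deg h\le 6$ together with $h\in\{f,f^\rho,f^{\rho^2}\}$, each case of which contradicts a degree count (e.g.\ $h=f^\rho$ gives $\deg h\ge 4+\deg_{Y_2}f\ge 7$). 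Some such concrete, characteristic-$p$, $t$-uniform device is what your outline is missing.
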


\begin{proof}
Let $\rho$ denote the cyclic shift $(Y_1,Y_2,Y_3)\mapsto(Y_2,Y_3,Y_1)$ and let $\sigma\in\text{Aut}(\overline\f_p)$ be the Frobenius map $(\ )\mapsto(\ )^p$. Recall that the homogeneous component of the highest degree of $G$ is 
\[
g_{18}=-64t^2Y_1^4Y_2^4Y_3^4(Y_1-Y_2)^2(Y_2-Y_3)^2(Y_3-Y_1)^2.
\]
All pseudo-cyclic factors of $g_{18}$ in $\overline\f_p[Y_1,Y_2,Y_3]$ are cyclic; therefore, all pseudo-cyclic factors of $G$ in $\overline\f_p[Y_1,Y_2,Y_3]$ are cyclic.

\medskip
$1^\circ$ We have
\[
G=a_8(Y_1,Y_2)Y_3^8+a_7(Y_1,Y_2)Y_3^7+\cdots,
\]
where
\begin{equation}\label{a8}
a_8(Y_1,Y_2)=16Y_1^2\alpha(Y_1,Y_2,t)\alpha(Y_1,Y_2,-t)
\end{equation}
and
\[
\alpha(Y_1,Y_2,T)=4Y_1+4TY_1Y_2+4Y_1^2Y_2+T^2Y_1Y_2^2+2TY_1^2Y_2^2-4Y_2^3-2TY_1Y_2^3.
\]
We claim that $\alpha(Y_1,Y_2,t)$ and $\alpha(Y_1,Y_2,-t)$ are irreducible in $\overline\f_p[Y_1,Y_2]$. The discriminant of $\alpha(Y_1,Y_2,t)$, as a polynomial in $Y_1$, is 
\[
D=16+32tY_2+24t^2 Y_2^2 -16tY_2^3 +8t^3 Y_2^3 +64Y_2^4 -16t^2 Y_2^4 +t^4 Y_2^4 +32tY_2^5 -4t^3 Y_2^5 +4t^2 Y_2^6.
\]
Assume to the contrary that $D$ is a square in $\overline\f_p[Y_2]$. Then 
\[
D-(2tY_2^3+aY_2^2+bY_2+c)^2=0,
\]
where $a,b\in\overline\f_p$ and $c=\pm 4$. 

When $c=4$,
\[
D-(2tY_2^3+aY_2^2+bY_2+4)^2\equiv -8(b-4t)Y_2 \pmod{Y_2^2}.
\]
Then $b=4t$, and it follows that 
\begin{align*}
&D-(2tY_2^3+aY_2^2+bY_2+4)^2\cr
&=-8(a-t^2)Y_2^2+8t(-4-a+t^2)Y_2^3+(64-a^2-32t^2+t^4)Y_2^4-4t(-8+a+t^2)Y_2^5\cr
&\ne 0,
\end{align*}
which is a contradiction.

When $c=-4$,
\[
D-(2tY_2^3+aY_2^2+bY_2-4)^2\equiv 8(b+4t)Y_2 \pmod{Y_2^2}.
\]
Then $b=-4t$, and it follows that 
\begin{align*}
&D-(2tY_2^3+aY_2^2+bY_2-4)^2\cr
&=8(a+t^2)Y_2^2+8t(a+t^2)Y_2^3+(64-a^2+t^4)Y_2^4-4t(-8+a+t^2)Y_2^5\cr
&\ne 0,
\end{align*}
which is a contradiction.

\medskip
$2^\circ$ Write $\alpha_1=\alpha(Y_1,Y_2,t)$ and $\alpha_2=\alpha(Y_1,Y_2,-t)$. Let $f,h\in\overline\f_p[Y_1,Y_2,Y_3]$ be irreducible factors of $G$ of the form
\begin{align*}
f\,&=\alpha_1\beta_1 Y_3^i+\text{lower terms in}\ Y_3,\cr
h\,&=\alpha_2\beta_2 Y_3^j+\text{lower terms in}\ Y_3,
\end{align*}
where $\beta_1,\beta_2\in\overline\f_p[Y_1,Y_2]$. We first claim that $\sigma(f)=f$. Otherwise, $f\sigma(f)\mid G$. Since $\sigma(\alpha_1)=\alpha_1$, it follows that $\alpha_1^2\mid a_8$, which is a contradiction. In the same way $\sigma(h)=h$.

Next, we claim that either $f$ or $h$ is cyclic. Otherwise, $ff^\rho f^{\rho^2}\mid G$ and $hh^\rho h^{\rho^2}\mid G$. Then $\deg f\le 18/3=6$ and $\deg h\le 6$. We must have $h\in\{f,f^\rho,f^{\rho^2}\}$. (Otherwise, $ff^\rho f^{\rho^2}hh^\rho h^{\rho^2}\mid G$, whence $\deg G\ge 6\cdot 4>18$, which is a contradiction.) If $h=f$, then $\alpha_2\mid \beta_1$. Hence $\deg f\ge \deg(\alpha_1\alpha_2)=8$, which is a contradiction. If $h=f^\rho$, then $\deg h\ge 4+\deg_{Y_3}h=4+\deg_{Y_2}f\ge 7$, which is a contradiction. If $h=f^{\rho^2}$, i.e., $f=h^\rho$, we have $\deg f\ge 7$, which is also a contradiction.
\end{proof}


\section{The Case $n=4$}

The proof for the case $n=4$ is more complicated but is based on the same approach for the case $n=3$.

\begin{thm}\label{T4.1} Let $p\ge 100,018,663$ be a prime and $b\in \f_{p^4}$ be such that $\text{\rm Tr}_{p^4/p}(b)\ne 0$. Then $f_b$ is not a PR of $\f_{p^4}$. (Note: $100,018,663$ is the $5,762,458$th prime.)
\end{thm}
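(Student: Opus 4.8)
The plan is to follow the strategy of Theorem~\ref{T3.1} line for line, adapted to $n=4$. Writing $Z(X)=X^p-X+b$, the numerator of $f_b(X+Y)-f_b(X)$ is again $F(X,Y)=Z(X)^2+(Y^p-Y)Z(X)+1-Y^{p-1}$, so it suffices to exhibit $\Delta,y\in\f_{p^4}$ with $y\ne0$ and $\text{Tr}_{p^4/p}(\Delta)=2\,\text{Tr}_{p^4/p}(b)=:t$ satisfying $\Delta^2=y^{2p}+y^2-2y^{1+p}+4y^{p-1}-4$. Putting $y_i=y^{p^{i-1}}$ and $\Delta_i=\Delta^{p^{i-1}}$ for $i=1,2,3,4$ turns this into four cyclically shifted quadratic relations $c_i:=\Delta_i^2=y_i^2+y_{i+1}^2-2y_iy_{i+1}+4y_{i+1}/y_i-4$ (indices read mod $4$) together with the single linear trace relation $\Delta_1+\Delta_2+\Delta_3+\Delta_4=t$.

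The first real step is to eliminate the square roots, i.e.\ to express $\Delta_1$ as a rational function of $c_1,c_2,c_3,c_4,t$ alone. Unlike the $n=3$ case, this requires squaring twice. Writing $t-\Delta_1=\Delta_2+\Delta_3+\Delta_4$ and squaring gives $2(\Delta_2\Delta_3+\Delta_3\Delta_4+\Delta_4\Delta_2)=M-2t\Delta_1$ with $M=t^2+c_1-c_2-c_3-c_4$; squaring the symmetric function $\Delta_2\Delta_3+\Delta_3\Delta_4+\Delta_4\Delta_2$ introduces the term $2\Delta_2\Delta_3\Delta_4(t-\Delta_1)$, whose square is $4(t-\Delta_1)^2c_2c_3c_4$, after which every $\Delta_i$ with $i\ge2$ survives only through its square. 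Solving the resulting identity for the one remaining linear occurrence of $\Delta_1$ yields $\Delta_1=\bigl(4(t^2+c_1)c_2c_3c_4-N^2-t^2M^2c_1\bigr)/\bigl(2t(4c_2c_3c_4-NM)\bigr)$ with $N=M^2/4+t^2c_1-(c_2c_3+c_3c_4+c_4c_2)$. Substituting the four $c_i$ and clearing the denominators $y_1y_2y_3y_4$ gives $\Delta_1=P(y_1,y_2,y_3,y_4)/\bigl(\cdots Q(y_1,y_2,y_3,y_4)\bigr)$, with $\Delta_2,\Delta_3,\Delta_4$ obtained by cyclic shifts; feeding these back into one quadratic relation clears to a single cyclic polynomial $G\in\f_p[Y_1,Y_2,Y_3,Y_4]$, now of degree $46$, together with a denominator $Q$.

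The crux, and the hard part, is the analog of Lemma~\ref{L3.2}: I must show $G$ has a \emph{cyclic absolutely irreducible} factor in $\f_p[Y_1,Y_2,Y_3,Y_4]$. I would argue exactly as in Lemma~\ref{L3.2}, but the bookkeeping is much heavier. First I would isolate the top-degree homogeneous component of $G$ and check that the cyclic shift permutes its irreducible pieces with equal multiplicities, so that every pseudo-cyclic factor of $G$ is forced to be cyclic (here the relevant root of unity is a $4$th root, not a cube root). Next I would take the leading coefficient of $G$ in one variable, say the coefficient of the top power of $Y_4$, factor it over $\overline\f_p[Y_1,Y_2,Y_3]$, and establish irreducibility of the pieces by verifying that the relevant discriminants are non-squares, as was done for $\alpha(Y_1,Y_2,\pm t)$. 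Finally, using Frobenius-invariance of these pieces together with a degree count against the cyclic group $\langle\rho\rangle$ of order $4$, I would exclude the possibility that every irreducible factor of $G$ has a nontrivial $\rho$-orbit, which forces a cyclic factor whose absolute irreducibility follows from the irreducibility of the leading-coefficient piece it carries. The main obstacle is precisely this case analysis: with $\langle\rho\rangle$ of order $4$, orbits now have size $1$, $2$, or $4$, so there is a genuinely new size-$2$ configuration to rule out, and the explicit factorization of the degree-$46$ leading coefficient is substantially more elaborate than the degree-$8$ coefficient $a_8$ in Lemma~\ref{L3.2}.

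Once a cyclic absolutely irreducible factor $d\mid G$ is secured, the rest parallels Theorem~\ref{T3.1} and is routine. Choosing $z\in\f_{p^4}$ with $M(z)$ invertible and setting $d_1=d((Y_1,Y_2,Y_3,Y_4)M(z))$, Lemma~\ref{L2.1} gives that $d_1\in\f_p[Y_1,Y_2,Y_3,Y_4]$ is cyclic and absolutely irreducible. The Cafure--Matera form of the Lang--Weil bound then yields $|V_{\f_p^4}(d_1)|\ge p^3-(46-1)(46-2)p^{5/2}-5\cdot46^{13/3}p^2$, while checking $\text{Res}(G,Q;Y_4)\ne0$ gives $\gcd(G,Q)=1$ and hence $|V_{\f_p^4}(G_1)\cap V_{\f_p^4}(Q_1)|\le46^2p^2$. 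Subtracting, $|V_{\f_p^4}(G_1)\setminus V_{\f_p^4}(Q_1)|\ge p^3-1980\,p^{5/2}-(5\cdot46^{13/3}+46^2)p^2>0$ precisely once $p$ exceeds the stated threshold $100{,}018{,}663$, and any point $(a_1,a_2,a_3,a_4)$ in this set produces, via $y=a_1z+a_2z^p+a_3z^{p^2}+a_4z^{p^3}$, the required $y\in\f_{p^4}$ with $G(y,y^p,y^{p^2},y^{p^3})=0$ and $Q\ne0$, completing the proof. The only genuinely new work beyond the $n=3$ argument is the degree-$46$ irreducibility claim.
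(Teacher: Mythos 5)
Your overall architecture is the same as the paper's: eliminate $\Delta_2,\Delta_3,\Delta_4$ by squaring twice (your formula for $\Delta_1$ in terms of $c_1,\dots,c_4,t$ is correct and is essentially \eqref{4.5}--\eqref{4.6}), obtain a cyclic polynomial $G$ of degree $46$, produce a cyclic absolutely irreducible factor, transport it by a Moore matrix via Lemma~\ref{L2.1}, and finish with the Cafure--Matera estimates (your point counts even carry the dimensionally correct exponents $p^3,p^{5/2},p^2$, which lead to the same threshold). But there is a genuine gap exactly where you wave at ``the only genuinely new work'': your plan for the analogue of Lemma~\ref{L3.2} is a transplant of the $n=3$ argument (factor the leading coefficient in $Y_4$, check discriminants are non-squares, count degrees against $\langle\rho\rangle$), and that argument does not survive the passage to $n=4$. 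With $\deg G=46$ and $|\langle\rho\rangle|=4$, degree counting cannot exclude, for instance, a splitting of $G$ into two degree-$23$ irreducible factors swapped by $\rho$, nor mixed configurations $c\,ff'hh'$ with $f'\in\{f^\rho,\sigma(f)\}$, $h'\in\{h^\rho,\sigma(h)\}$; you yourself concede there is ``a genuinely new size-$2$ configuration to rule out'' but supply no mechanism for ruling it out. The paper's Lemma~\ref{L4.2} does this with two structural inputs that are absent from your sketch: in $1^\circ$, a Galois-theoretic argument --- adjoining $\Delta_1,\Delta_2$ gives a $(\Bbb Z/2\Bbb Z)^2$-extension of $\overline\f_p(y_1,y_2,y_3)$, forcing $\deg_{Y_4}f\in\{4,8,16\}$ for every irreducible factor $f$ of $G$ --- and in $2^\circ$--$3^\circ$, a parity argument --- $G$ has only even homogeneous components $g_{46},\dots,g_{32}$, and via homogenization plus the explicit factorization of $G(Y_1,Y_2,Y_1,Y_2)$, every irreducible factor $f=f_i+f_{i-1}+\cdots$ satisfies $\tilde f=f$, i.e.\ has only even degree gaps. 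These combine in $5^\circ$ to show that no factorization $G=cff'hh'$ can exist (it would force $2(i+j)=46$ and $2(i+j-2s-2t)=32$ simultaneously, a parity contradiction), and that step is precisely what kills the size-$2$ orbit configurations; the remaining case analysis on $k=\min\deg_{Y_i}f\in\{4,8,16\}$, interleaving $\rho$-orbits with $\sigma$-orbits, is then forced through. None of this is recoverable from a factorization of the leading coefficient of $G$ in $Y_4$, which the paper never uses.

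A second, smaller deviation compounds the problem: you keep the trace parameter $t=2\,\text{Tr}_{p^4/p}(b)$ general, whereas the paper first invokes the reduction of \cite[Conjecture 4.1$'$]{Hou-Sze-arXiv:1910.11989} to replace $b$ by $1/2$, so that $t=4$ and $G$, $L$ become fixed polynomials with integer coefficients. This is not cosmetic: the resultant computation certifying $\gcd(G,L)=1$ in \eqref{4.18}, the squarefreeness of $g_{32}$, and the irreducibility of the factors of $G(Y_1,Y_2,Y_1,Y_2)$ are all explicit calculations performed on these fixed polynomials, and they are needed both for your final counting step and inside Lemma~\ref{L4.2}. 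With a free parameter $t$ you would have to re-establish every one of them uniformly in $t$, which your proposal does not address.
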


\begin{proof}
First, by \cite[Conjecture 4.1$'$]{Hou-Sze-arXiv:1910.11989}, it suffices to show that $f_{1/2}$ is not a PR of $\f_{p^4}$. We have
\[
f_{1/2}(X+Y)-f_{1/2}(X)=\frac{YF(X,Y)}{Z(X)Z(X+Y)},
\]
where
\[
Z(X)=X^p-X+\frac 12
\]
and
\begin{equation}\label{4.1}
F(X,Y)=Z(X)^2+(Y^p-Y)Z(X)+1-Y^{p-1}\in\f_p[X,Y].
\end{equation}
It suffices to show that there exists $(x,y)\in\f_{p^4}^2$ with $y\ne 0$ such that $F(x,y)=0$. To this end, it suffices show that there exist $\Delta,y\in\f_{p^4}$, with $y\ne 0$ and $\text{Tr}_{p^4/p}(\Delta)=2\text{Tr}_{p^4/p}(1/2)=4$, satisfying 
\begin{equation}\label{4.2}
\Delta^2=y^{2p}+y^2-2y^{1+p}+4y^{p-1}-4;
\end{equation}
see \eqref{3.1} -- \eqref{3.4}. 

Assume that such $\Delta$ and $y$ exist, and let $y_i=y^{p^{i-1}}$ and $\Delta_i=\Delta^{p^{i-1}}$, $1\le i\le 4$. Then
\begin{equation}\label{4.3}
\Delta_i^2=y_i^2+y_{i+1}^2-2y_iy_{i+1}+4\frac{y_{i+1}}{y_i}-4,\quad 1\le i\le 4,
\end{equation}
where the subscript is taken modulo 4, and
\begin{equation}\label{4.4}
\Delta_1+\Delta_2+\Delta_3+\Delta_4=4.
\end{equation}
Under the condition \eqref{4.4}, one can express $\Delta_1$ in terms of $\Delta_1^2,\dots,\Delta_4^2$ as follows:
\begin{align*}
&(4-\Delta_1-\Delta_2)^2=(\Delta_3+\Delta_4)^2,\cr
&(4-\Delta_1)^2+\Delta_2^2-2(4-\Delta_1)\Delta_2=\Delta_3^2+\Delta_4^2+2\Delta_3\Delta_4,\cr
&(16+\Delta_1^2-8\Delta_1+\Delta_2^2-\Delta_3^2-\Delta_4^2)^2=\bigl[2(4-\Delta_1)\Delta_2+2\Delta_3\Delta_4\bigr]^2,\cr
&(16+\Delta_1^2+\Delta_2^2-\Delta_3^2-\Delta_4^2)^2+64\Delta_1^2-16\Delta_1(16+\Delta_1^2+\Delta_2^2-\Delta_3^2-\Delta_4^2)\cr
&=4\bigl[(4-\Delta_1)^2\Delta_2^2+\Delta_3^2\Delta_4^2+2(4-\Delta_1)\Delta_2\Delta_3\Delta_4\bigr],\cr
&(16+\Delta_1^2+\Delta_2^2-\Delta_3^2-\Delta_4^2)^2+64\Delta_1^2-16\Delta_1(16+\Delta_1^2+\Delta_2^2-\Delta_3^2-\Delta_4^2)\cr
&-4(4-\Delta_1)^2\Delta_2^2-4\Delta_3^2\Delta_4^2=8(4-\Delta_1)\Delta_2\Delta_3\Delta_4.
\end{align*}
Squaring both sides leads to
\begin{equation}\label{4.5}
\Delta_1=\frac{A(\Delta_1^2,\Delta_2^2,\Delta_3^2,\Delta_4^2)}{32B(\Delta_1^2,\Delta_2^2,\Delta_3^2,\Delta_4^2)},
\end{equation}
provided $B(\Delta_1^2,\Delta_2^2,\Delta_3^2,\Delta_4^2)\ne 0$, where
\begin{align*}
A(X_1,X_2,X_3,X_4)&=65536+114688X_1+\cdots+X_4^4,\cr
B(X_1,X_2,X_3,X_4)&=4096+1792X_1+\cdots-X_4^3.
\end{align*}
In the same way,
\begin{equation}\label{4.6}
\Delta_i=\frac{A(\Delta_i^2,\Delta_{i+1}^2,\Delta_{i+2}^2,\Delta_{i+3}^2)}{32B(\Delta_i^2,\Delta_{i+1}^2,\Delta_{i+2}^2,\Delta_{i+3}^2)},\quad 1\le i\le 4.
\end{equation}
The equation
\begin{equation}\label{4.7}
\Bigl[\frac{A(\Delta_1^2,\Delta_2^2,\Delta_3^2,\Delta_4^2)}{32B(\Delta_1^2,\Delta_2^2,\Delta_3^2,\Delta_4^2)}\Bigr]^2=\Delta_1^2
\end{equation}
can be written as 
\begin{equation}\label{4.8}
\frac{P(\Delta_1^2,\Delta_2^2,\Delta_3^2,\Delta_4^2)}{1024B(\Delta_1^2,\Delta_2^2,\Delta_3^2,\Delta_4^2)^2}=0,
\end{equation}
where
\[
P(X_1,X_2,X_3,X_4)=4294967296-2147483648X_1+\cdots+X_4^8.
\]
Using \eqref{4.6}, equation \eqref{4.4} becomes
\begin{equation}\label{4.9}
\frac{P(\Delta_1^2,\Delta_2^2,\Delta_3^2,\Delta_4^2)Q(\Delta_1^2,\Delta_2^2,\Delta_3^2,\Delta_4^2)}{16\prod_{i=1}^4B(\Delta_i^2,\Delta_{i+1}^2,\Delta_{i+2}^2,\Delta_{i+3}^2)^2}=0,
\end{equation}
where
\[
Q(X_1,X_2,X_3,X_4)=-209715-196608\Delta_1^2+\cdots+\Delta_4^5.
\]
Using \eqref{4.3}, we can write
\begin{align*}
P(\Delta_1^2,\Delta_2^2,\Delta_3^2,\Delta_4^2)=\,&-\frac{2^{16}}{(y_1y_2y_3y_4)^8}G(y_1,y_2,y_3,y_4),\cr
B(\Delta_1^2,\Delta_2^2,\Delta_3^2,\Delta_4^2)=\,&-\frac{2^4}{(y_1y_2y_3y_4)^3}L(y_1,y_2,y_3,y_4),
\end{align*}
where
\[
G(Y_1,Y_2,Y_3,Y_4)=-Y_1^{16}Y_2^8Y_3^8+16Y_1^{15}Y_2^8Y_3^8Y_4+\cdots+4Y_1^8Y_2^{10}Y_3^{12}Y_4^{16}
\]
and 
\[
L(Y_1,Y_2,Y_3,Y_4)=4Y_1^6Y_2^3Y_3^3-10Y_1^4Y_2^3Y_3^3Y_4-\cdots+Y_1^3Y_2^3Y_3^5Y_4^7
\]
are cyclic polynomials over $\f_p$ of degree 46 and 18, respectively, and $\deg_{Y_i}G=16$, $1\le i\le 4$. More precisely,
\begin{equation}\label{4.9.1}
G=g_{46}+g_{44}+g_{42}+g_{40}+g_{38}+g_{36}+g_{34}+g_{32},
\end{equation}
where $g_i\in\f_p[Y_1,Y_2,Y_3,Y_4]$ is homogeneous of degree $i$:
\begin{align}\label{4.10}
g_{46}=\,&-4(Y_1Y_2Y_3Y_4)^8\bigl[(Y_1-Y_2)(Y_2-Y_3)(Y_3-Y_4)(Y_4-Y_1)\bigr]^2\\ 
&\cdot\bigl[(Y_1-Y_3)(Y_2-Y_4)\bigr]^2(Y_1-Y_2+Y_3-Y_4)^2,\cr
\label{4.11}
g_{44}=\,&(Y_1Y_2Y_3Y_4)^6(Y_1^{10}Y_2^6Y_3^4-4Y_1^9Y_2^7Y_3^4+\cdots+Y_2^4Y_3^6Y_4^{10}),\\
\label{4.12}
g_{42}=\,&2(Y_1Y_2Y_3Y_4)^5(Y_1^{11}Y_2^7Y_3^4-4Y_1^{10}Y_2^8Y_3^4+\cdots-Y_1^2Y_2^2Y_3^7Y_4^{11}),\\
\label{4.13}
g_{40}=\,&(Y_1Y_2Y_3Y_4)^4(Y_1^{12}Y_2^8Y_3^4-4Y_1^{11}Y_2^9Y_3^4+\cdots+Y_1^4Y_3^8Y_4^{12}),\\
\label{4.14}
g_{38}=\,&2(Y_1Y_2Y_3Y_4)^3(2Y_1^{13}Y_2^8Y_3^5-6Y_1^{12}Y_2^9Y_3^5+\cdots-2Y_1^5Y_2^2Y_3^6Y_4^{13}),\\
\label{4.15}
g_{36}=\,&2(Y_1Y_2Y_3Y_4)^2(3Y_1^{14}Y_2^8Y_3^6-6Y_1^{13}Y_2^9Y_3^6+\cdots+3Y_1^6Y_2^4Y_3^4Y_4^{14}),\\
\label{4.16}
g_{34}=\,&4Y_1Y_2Y_3Y_4(Y_1^{15}Y_2^8Y_3^7-Y_1^{14}Y_2^9Y_3^7+\cdots-Y_1^7Y_2^6Y_3^2Y_4^{15}),\\
\label{4.17}
g_{32}=\,&Y_1^{16}Y_2^8Y_3^8-16Y_1^{15}Y_2^8Y_3^8Y_4-\cdots+Y_1^8Y_2^8Y_4^{16}.
\end{align}
Later, we will use the fact that
\begin{equation}\label{4.18}
\text{gcd}(G,L)=1.
\end{equation}
This fact follows from the computation that 
\begin{align*}
&\text{Res}(G(1,1,Y_3,Y_4),L(1,1,Y_3,Y_4);Y_4)\cr
&=2^{112}
(-1+Y_3)^8Y_3^{56} (3+Y_3)^8 (-19+2Y_3+Y_3^2)^8 (-1+8Y_3-26Y_3^2 +3Y_3^4)^4 \cr
&\kern1em \cdot (-16+11Y_3+88Y_3^2 -16Y_3^3 -98Y_3^4 +5Y_3^5 +10Y_3^6)^4 \cr
&\kern1em \cdot  (16+8Y_3-204Y_3^2 +369Y_3^3 +30Y_3^4 -76Y_3^5 -2Y_3^6 +3Y_3^7)^4\cr
&\kern1em \cdot  (256+672Y_3+505Y_3^2 -4456Y_3^3 +5718Y_3^4 -364Y_3^5 -1139Y_3^6\cr       &\kern1.7em +52Y_3^7 +52Y_3^8)^2(-256-256Y_3+832Y_3^2 +672Y_3^3 -1056Y_3^4 \cr
&\kern1.7em  -392Y_3^5 +431Y_3^6 +76Y_3^7 -66Y_3^8 -4Y_3^9 +3Y_3^{10})^2\cr
&\ne 0.
\end{align*}

To prove the theorem, it suffice to show that there exists $y\in\f_{p^4}$ such that $G(y,y^p,y^{p^2},y^{p^3})=0$ but $L(y,y^p,y^{p^2},y^{p^3})\ne 0$. Once this is done, the proof of the theorem is completed as follows: Clearly, $y\ne 0$. Let $y_i=y^{p^{i-1}}$, $1\le i\le 4$. Let $\Delta_i$ ($1\le i\le 4$) be given by \eqref{4.6} and in \eqref{4.6} let $\Delta_i^2$ be given by \eqref{4.3}, whence $\Delta_i$ is defined in terms of $y_1,\dots,y_4$. For $\Delta_i$ so defined,  \eqref{4.8} and \eqref{4.9} are satisfied. Let $\Delta=\Delta_1$. From \eqref{4.8} we have \eqref{4.7} and hence \eqref{4.2}; from \eqref{4.9} we have \eqref{4.4}, i.e., $\text{Tr}_{p^4/p}(\Delta)=4$.

Choose $z\in\f_{p^4}$ such that $M(z)$ is invertible and let 
\[
G_1=G((Y_1,Y_2,Y_3,Y_4)M(z)).
\]
By Lemma~\ref{L4.2} below, $G$ has a cyclic absolutely irreducible factor $d\in\f_p[Y_1,Y_2,Y_3,Y_4]$. Then $d_1=d((Y_1,Y_2,Y_3,Y_4)M(z))\in\f_p[Y_1,Y_2,Y_3,Y_4]$ is a cyclic absolutely irreducible factor of $G_1$. By \cite[Theorem~5.2]{Cafure-Matera-FFA-2006},
\begin{align*}
|V_{\f_p^4}(d_1)|\,&\ge p^2-(46-1)(46-2)p^{3/2}-5\cdot 46^{13/3}p\cr
&=p^2-1980p^{3/2}-5\cdot 46^{13/3}p.
\end{align*}
Let $L_1=L((Y_1,Y_2,Y_3,Y_4)M(z))$. By \eqref{4.18}, $\text{gcd}(G_1,L_1)=1$. Then by \cite[Lemma~2.2]{Cafure-Matera-FFA-2006},
\[
|V_{\f_p^4}(G_1)\cap V_{\f_p^4}(L_1)|\le 46^2p.
\]
Hence 
\begin{align*}
|V_{\f_p^4}(G_1)\setminus V_{\f_p^4}(L_1)|\,&\ge |V_{\f_p^4}(d_1)|-|V_{\f_p^4}(G_1)\cap V_{\f_p^4}(L_1)|\cr
&\ge p^2-1980p^{3/2}-(5\cdot 46^{13/3}+46^2)p\cr
&=p( p-1980p^{1/2}-(5\cdot 46^{13/3}+46^2))\cr
&>0
\end{align*}
since 
\[
p\ge 100,018,663>100,018,659\approx\frac 14\bigl[1980+(1980^2+4(5\cdot 46^{13/3}+46^2))^{1/2}\bigr]^2.
\]
Let $(a_1,a_2,a_3,a_4)\in V_{\f_p^4}(G_1)\setminus V_{\f_p^4}(L_1)$ and let $y=a_1z+a_2z^p+a_3z^{p^2}+a_4z^{p^3}\in\f_{p^4}$. Then $G(y,y^p,y^{p^2},y^{p^3})=0$ but $L(y,y^p,y^{p^2},y^{p^3})\ne 0$.
\end{proof}

\begin{lem}\label{L4.2}
The polynomial $G$ in \eqref{4.9.1} has a cyclic absolutely irreducible factor in $\f_p[Y_1,Y_2,Y_3,Y_4]$. \end{lem}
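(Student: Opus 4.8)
\emph{Proof proposal.} The plan is to follow the template of the proof of Lemma~\ref{L3.2}: regard $G$ as a polynomial in a single indeterminate, factor its leading coefficient into absolutely irreducible pieces, and then combine Frobenius-invariance with a cyclic-orbit degree count to pin down an irreducible factor of $G$ that is forced to be cyclic. As there, let $\rho$ denote the cyclic shift $(Y_1,Y_2,Y_3,Y_4)\mapsto(Y_2,Y_3,Y_4,Y_1)$, now of order $4$, and let $\sigma$ be the Frobenius $(\ )\mapsto(\ )^p$. Since $G\in\f_p[Y_1,Y_2,Y_3,Y_4]$ is cyclic, any irreducible factor $f$ of $G$ has both $\sigma(f)$ and $f^\rho$ dividing $G$, and leading coefficients in a fixed variable are multiplicative.

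First I would view $G$ as a polynomial in $Y_4$ of degree $16$ and compute $A(Y_1,Y_2,Y_3)=\mathrm{lead}_{Y_4}(G)$. The top homogeneous part of $A$ comes from $g_{46}$: extracting the $Y_4^{16}$-term of \eqref{4.10} gives $-4Y_1^8Y_2^8Y_3^8(Y_1-Y_2)^2(Y_2-Y_3)^2(Y_1-Y_3)^2$, and after adding the contributions of $g_{44},\dots,g_{32}$ one obtains an explicit $A\in\f_p[Y_1,Y_2,Y_3]$. I would then factor $A$ over $\overline\f_p$ and isolate an irreducible factor $\alpha$ that is $\sigma$-invariant and occurs in $A$ with multiplicity one. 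Absolute irreducibility of $\alpha$ would be established exactly as for $\alpha(Y_1,Y_2,\pm t)$ in Lemma~\ref{L3.2}: view $\alpha$ as a polynomial in one variable, form its discriminant, and rule out the discriminant being a perfect square by matching it against $(\cdots)^2$ modulo low powers of a variable.

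Given such an $\alpha$, choose an absolutely irreducible factor $f$ of $G$ with $\alpha\mid\mathrm{lead}_{Y_4}(f)$ (one exists by multiplicativity of the leading coefficient). Then $\sigma(f)=f$: otherwise $f\,\sigma(f)\mid G$, and $\sigma(\alpha)=\alpha$ forces $\alpha^2\mid A$, contradicting multiplicity one. It remains to show $f^\rho=f$. If $f$ were not cyclic, its $\rho$-orbit would contribute $f f^\rho f^{\rho^2}f^{\rho^3}$ (orbit size $4$, whence $\deg f\le 11$) or $f f^\rho$ (orbit size $2$, whence $\deg f\le 23$) as a divisor of the degree-$46$ polynomial $G$; tracking $\mathrm{lead}_{Y_4}(f^{\rho^i})$ and comparing with the factorization of $A$ should force the $\alpha$-type factors to collide inside $A$, again violating multiplicity one. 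A case analysis of this kind, parallel to $h\in\{f,f^\rho,f^{\rho^2}\}$ in Lemma~\ref{L3.2} but now over the order-$4$ orbit, should close the argument.

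The main obstacle, and the genuine departure from the case $n=3$, is the pseudo-cyclic structure of the top form. Unlike $g_{18}$, the form $g_{46}$ has irreducible pseudo-cyclic factors that are \emph{not} cyclic: under $\rho$ one has $(Y_1-Y_2+Y_3-Y_4)\mapsto-(Y_1-Y_2+Y_3-Y_4)$ and $(Y_1-Y_3)(Y_2-Y_4)\mapsto-(Y_1-Y_3)(Y_2-Y_4)$, so the blanket claim ``every pseudo-cyclic factor is cyclic'' used in Lemma~\ref{L3.2} fails here. Hence the degree count alone does not exclude the possibility $f^\rho=-f$, since such a factor occupies only one slot in the factorization of $G$ and so evades any orbit-size bound. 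Overcoming this requires using the leading coefficient decisively: one must verify that the chosen $\alpha$ is incompatible with the sign-reversing symmetry, i.e.\ that no irreducible factor $f$ of $G$ with $\alpha\mid\mathrm{lead}_{Y_4}(f)$ can satisfy $f^\rho=-f$ — for instance because $f^\rho=-f$ would force $f$ to have equal degrees in all four variables and a leading coefficient lying in the $\rho$-orbit of $\alpha$, a configuration that the explicit factorization of $A$ excludes. The larger computation (a degree-$16$ leading coefficient and its discriminant, against degree $8$ in Lemma~\ref{L3.2}) is expected but routine; the conceptual crux is this correct handling of the order-$4$ action in the presence of genuine pseudo-cyclic factors of $g_{46}$.
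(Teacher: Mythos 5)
Your proposal correctly identifies the central obstacle — $g_{46}$ has irreducible pseudo-cyclic factors such as $Y_1-Y_2+Y_3-Y_4$ and $(Y_1-Y_3)(Y_2-Y_4)$ that change sign under $\rho$, so the blanket argument of Lemma~\ref{L3.2} fails — but it does not resolve it. Your proposed fix (``$f^\rho=-f$ would force \dots a configuration that the explicit factorization of $A$ excludes'') is a hope, not an argument: nothing in the leading-coefficient framework rules out an irreducible $f$ with $f^\rho=-f$ whose $Y_4$-leading coefficient contains your $\alpha$. The paper's actual proof handles this by an entirely different mechanism: it first proves (step $3^\circ$) that every irreducible factor $f$ of $G$ satisfies $\tilde f=f$, i.e.\ its homogeneous components all have degrees of the same parity, using the squarefreeness of $g_{32}$ and the explicit factorization of $G(Y_1,Y_2,Y_1,Y_2)$; then (step $4^\circ$) if $f^\rho=cf$, the top form $f_i$ divides $g_{46}$, whence $c=\pm1$, and $c=-1$ would force $(Y_1-Y_3)(Y_2-Y_4)$ or $Y_1-Y_2+Y_3-Y_4$ to divide both $f_i$ and $h_j=g_{46}/f_i$, hence to divide $g_{44}=f_ih_{j-2}+f_{i-2}h_j$ — a contradiction verified against \eqref{4.11}. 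Without some substitute for this parity structure, your case analysis cannot close.

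Two further gaps make the proposal unworkable as written. First, the paper's indispensable structural input is its step $1^\circ$: a Galois-theoretic argument (via the nonsquares $\Delta_1^2$, $\Delta_2^2$, $\Delta_1^2\Delta_2^2$) showing $4\mid[\overline\f_p(y_1,y_2,y_3,y_4):\overline\f_p(y_1,y_2,y_3)]$, so that $\deg_{Y_4}f\in\{4,8,16\}$ for every irreducible factor $f$. This is what makes the case analysis finite; your orbit-size degree bounds ($\deg f\le 11$ or $\le 23$) are far too weak, and with an order-$4$ action you must also handle the intermediate case $f^{\rho^2}=f$, $f^\rho\ne f$, which has no analogue in Lemma~\ref{L3.2} and which the paper treats in its Case 3.2 using a separate exclusion lemma (step $5^\circ$: $G$ admits no decomposition $cff'hh'$ with $f'\in\{f^\rho,\sigma(f)\}$, $h'\in\{h^\rho,\sigma(h)\}$, proved by a parity count on homogeneous degrees). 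Second, your starting point — that the $Y_4$-leading coefficient $A$ of $G$ possesses a $\sigma$-invariant, multiplicity-one, absolutely irreducible factor $\alpha$ — is an unverified computational assumption; here $A$ is a large non-homogeneous polynomial in three variables (components of degree $16$ through $30$, with contributions from all of $g_{46},\dots,g_{32}$), and there is no reason it inherits the clean structure $16Y_1^2\alpha_1\alpha_2$ of $a_8$ in \eqref{a8}. In short, you have transplanted the $n=3$ template and flagged where it breaks, but the ideas that the paper actually uses to repair it — the degree-divisibility constraint, the homogeneous-parity invariant, and the four-factor exclusion — are all missing.
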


\begin{proof}
Throughout this proof, $\rho$ denotes the cyclic shift $(Y_1,Y_2,Y_3,Y_4)\mapsto(Y_2,Y_3,Y_4,Y_1)$ and $\sigma\in\text{Aut}(\overline\f_p)$ is the Frobenius map $(\ )\mapsto(\ )^p$. For any $f\in\overline\f_p[Y_1,Y_2,Y_3,Y_4]$, $f_i$ denotes the homogenous component of degree $i$ in $f$. For $f=f_i+f_{i-1}+\cdots$ with $f_i\ne 0$, let $\bar f=f_i+f_{i-1}X+\cdots\in\overline\f_p[Y_1,Y_2,Y_3,Y_4,X]$ be the homogenization of $f$ and let $\tilde f=\bar f(Y_1,Y_2,Y_3,Y_4,-1)=f_i-f_{i-1}+f_{i-2}-\cdots$. Since $\overline G=g_{46}+g_{44}X^2+\cdots+g_{32}X^{14}$, we have $\widetilde G=G$. Therefore, if $f\mid G$, then $\bar f\mid\overline G$, whence $\tilde f\mid\widetilde G$, i.e., $\tilde f\mid G$.

\medskip
$1^\circ$ Let $y_1,y_2,y_3$ be independent indeterminates and let $y_4$ be a root of $G(y_1,y_2,y_3,Y_4)$. We claim that $4\mid [\overline\f_p(y_1,y_2,y_3,y_4):\overline\f_p(y_1,y_2,y_3)]$. Let $\Delta_i$, in terms of $y_1,\dots,y_4$, be given by \eqref{4.6} and \eqref{4.3}. Then \eqref{4.3} is satisfied. By \eqref{4.3}, $\Delta_1^2,\Delta_2^2\in\f_p(y_1,y_2,y_3)$ and it is easy to see that $\Delta_1^2$, $\Delta_2^2$ and $\Delta_1^2\Delta_2^2$ are all nonsquares in $\overline\f_p(y_1,y_2,y_3)$. It follows that 
\[
[\overline\f_p(y_1,y_2,\Delta_1,\Delta_2):\overline\f_p(y_1,y_2,y_3)]=4.
\]
Hence the claim. (A more general fact which is easily proved by induction: If $F$ is a field with $\text{char}\,F\ne 2$ and $u_1,\dots, u_n\in F$ are such that for every $\emptyset\ne I\subset\{1,\dots,n\}$, $\prod_{i\in I}u_i$ is a nonsquare in $F$, then $[F(\sqrt{u_1},\dots,\sqrt{u_n}):F]=2^n$ and $\text{Aut}(F(\sqrt{u_1},\dots,\sqrt{u_n})/F)\cong(\Bbb Z/2\Bbb Z)^n$.)

\[
\beginpicture
\setcoordinatesystem units <4mm,4mm> point at 0 0

\setlinear
\plot 0 1  0 3 /
\plot 0 5  0 7 /

\put {$\overline\f_p(y_1,y_2,y_3)$} at 0 0
\put {$\overline\f_p(y_1,y_2,y_3,\Delta_1,\Delta_2)$} at 0 4
\put {$\overline\f_p(y_1,y_2,y_3,y_4)$} at 0 8
\put {$\scriptstyle 4$} [l] at 0.3 2

\endpicture
\]
\medskip


$2^\circ$ We claim that $g_{32}$ has no factors with multiplicity $>1$. This claim follows from the following computation:
\begin{align*}
&\text{Res}\Bigl( g_{32}(1,-1,Y_3,Y_4),\,\frac{\partial}{\partial Y_4}g_{32}(1,-1,Y_3,Y_4);\,Y_4\Bigr)\cr
&=2^{256}\cdot 3^8\,Y_3^{148}(1-Y_3)^8(1+Y_3)^8\cdots(65536+245760Y_3-\cdots+Y_3^{12})\cr
&\ne 0.
\end{align*}

\medskip
$3^\circ$ We claim that if $f\in\overline\f_p[Y_1,Y_2,Y_3,Y_4]$ is an irreducible factor of $G$, then $\tilde f=f$. Assume the contrary. Then $f\tilde f\mid G$. Write $f=f_i+f_{i-1}+\cdots+f_{i-s}$, where $f_if_{i-s}\ne 0$. By $1^\circ$, $i\ge 4$. Note that $\tilde f=f_i-f_{i-1}+\cdots+ (-1)^sf_{i-s}$. It follows that $f_{i-s}^2\mid g_{32}$. By $2^\circ$, we have $i-s=0$, that is, $f=1+f_1+\cdots+f_i$. We have
\[
G(Y_1,Y_2,Y_1,Y_2)=-2^8(Y_1Y_2)^{12}\alpha(Y_1,Y_2)\alpha(Y_2,Y_1)\beta(Y_1,Y_2)^2\beta(Y_2,Y_1)^2,
\]
where 
\begin{align*}
\alpha(Y_1,Y_2)\,&=Y_1^2-Y_1Y_2+Y_1^2Y_2+Y_2^2-Y_1Y_2^2,\cr
\beta(Y_1,Y_2)\,&=4Y_1-8Y_2+Y_1^2Y_2-2Y_1Y_2^2+Y_2^3.
\end{align*}
It is easy to see see that $\alpha$ and $\beta$ are irreducible in $\overline\f_p[Y_1,Y_2]$. (The discriminants of $\alpha$ and $\beta$, as polynomials in $Y_1$, are $Y_2^2(-3+Y_2)(1+Y_2)$ and $16(1+Y_2^2)$, respectively. These are  nonsquares in $\overline\f_p(Y_2)$.) Therefore $G(Y_1,Y_2,Y_1,Y_2)$ does not have any nonconstant factor with nonzero constant term. It follows that $f(Y_1,Y_2,Y_1,Y_2)=1$. Then
\[
42=\deg G(Y_1,Y_2,Y_1,Y_2)\le \deg(G/f\tilde f)\le 46-2i\le 46-8=38,
\]
which is a contradiction.

\medskip
$4^\circ$ We claim that if $f$ is a pseudo-cyclic absolutely irreducible factor of $G$, then $f$ is cyclic. We have $f^\rho=cf$, where $c\in\overline\f_p^*$. Let $h=G/f$. By $3^\circ$, $f=f_i+f_{i-2}+\cdots$ ($f_i\ne 0$) and hence $h=h_j+h_{j-2}+\cdots$ ($h_j\ne 0$). Then $f_ih_j=g_{46}$. Since $f_i\mid g_{46}$ and $f_i^\rho=cf_i$, it follows from \eqref{4.10} that 
\begin{align*}
f_i=\,&d(Y_1Y_2Y_3Y_4)^{i_1}\bigl[(Y_1-Y_2)(Y_2-Y_3)(Y_3-Y_4)(Y_4-Y_1)\bigr]^{i_2}\cr 
&\cdot\bigl[(Y_1-Y_3)(Y_2-Y_4)\bigr]^{i_3}(Y_1-Y_2+Y_3-Y_4)^{i_4},
\end{align*}
where $d\in\overline \f_p^*$, $0\le i_1\le 8$, $0\le i_2,i_3,i_4\le 2$. Thus $f_i^\rho=\pm f_i$. If  $f_i^\rho=-f_i$, it follows from $f_ih_j= g_{46}$ that either $(Y_1-Y_3)(Y_2-Y_4)$ or $Y_1-Y_2+Y_3-Y_4$ divides both $f_i$ and $h_j$. Then $g_{44}=f_ih_{j-2}+f_{i-2}h_j$ is divisible by $(Y_1-Y_3)(Y_2-Y_4)$ or $Y_1-Y_2+Y_3-Y_4$ , which is a contradiction.

\medskip
$5^\circ$
We claim that $G$ cannot be written as $G=cff'hh'$, where $c\in\overline \f_p^*$, $f,g\in\overline \f_p[Y_1,Y_2,Y_3,Y_4]$ are irreducible or equal to $1$. $f'=f^\rho$ or $\sigma(f)$, and $h'=h^\rho$ or $\sigma(h)$. Otherwise, by $3^\circ$, $f=f_i+f_{i-2}+\cdots+f_{i-2s}$ and $h=h_j+h_{j-2}+\cdots+h_{j-2t}$, where $f_if_{i-2s}h_jh_{j-2t}\ne 0$. Since $G=g_{46}+\cdots+g_{32}$, we have $2(i+j)=46$ and $2(i-2s+j-2t)=32$, which is impossible.

\medskip
$6^\circ$ Let
\[
k=\min\bigl\{\deg_{Y_i}f:f\in\overline\f_p[Y_1,Y_2,Y_3,Y_4]\ \text{is irreducible},\ f\mid G,\ 1\le i\le 4\Bigr\}.
\]
We may assume that $k=\deg_{Y_4}f$ for some irreducible factor $f$ of $G$ in $\overline\f_p[Y_1,Y_2,Y_3,Y_4]$. Clearly, $G$ does not have any nontrivial factor in $\overline\f_p^[Y_1,Y_2,Y_3]$, so $k>0$. By $1^\circ$, we have $k\in\{4,8,16\}$. Let $l$ be the smallest integer such that $f\in\f_{p^l}[Y_1,Y_2,Y_3,Y_4]$.

\medskip
{\bf Case 1.} Assume that $k=16$. Then $G=f$ and we are done.

\medskip
{\bf Case 2.} Assume that $k=8$. We claim that $f$ cyclic. Otherwise, by $4^\circ$, $f$ is not pseudo-cyclic. Then $ff^\rho\mid G$. Since $\deg_{Y_4}(ff^\rho)\ge 8+8=16$, we have $G=cff^\rho$ for some $c\in\overline\f_p^*$, which is impossible by $5^\circ$. Hence the claim is proved.

If $l>1$, then $G=d\,f\sigma(f)$ for some $d\in\overline\f_p^*$, which is impossible by $5^\circ$. Hence $l=1$, i.e., $f\in\f_p[Y_1,Y_2,Y_3,Y_4]$.

\medskip
{\bf Case 3.}
Assume that $k=4$. We first claim that $f^{\rho^2}=cf$ for some $c\in\overline\f_p^*$. Otherwise, $ff^\rho f^{\rho^2}f^{\rho^3}\mid G$. Since $\deg_{Y_4}(ff^\rho f^{\rho^2}f^{\rho^3})\ge 4\cdot 4=\deg_{Y_4}G$, we have $G=d\,ff^\rho f^{\rho^2}f^{\rho^3}$ for some $d\in\overline\f_p^*$, which is impossible by $5^\circ$. So the claim is proved. Write $f=f_i+f_{i-2}+\cdots$,
where $f_i\ne 0$. Since $f_i\mid g_{46}$ and $f_i^{\rho^2}=cf_i$, we have $c=1$, so $f^{\rho^2}=f$.

\medskip
{\bf Case 3.1.} Assume that $f$ is cyclic. Since $f\sigma(f)\cdots\sigma^{l-1}(f)\mid G$, we have $4l\le \deg_{Y_4}G=16$, i.e., $l\le 4$.

If $l=1$, then $f$ is a cyclic absolutely irreducible factor of $G$ in $\f_p[Y_1,Y_2,Y_3,Y_4]$, and we are done.

If $l=4$, then $G=ef\sigma(f)\sigma^2(f)\sigma^3(f)$ for some $e\in\overline\f_p^*$, which is impossible by $5^\circ$.

If $l=3$, $G/f\sigma(f)\sigma^2(f)$ is a cyclic absolutely irreducible factor of $G$ in $\f_p[Y_1,Y_2,Y_3,Y_4]$, and we are done.

If $l=2$, then $H:=G/f\sigma(f)$ is cyclic and belongs to $\f_p[Y_1,Y_2,Y_3,Y_4]$. If $H$ is absolutely irreducible, we are done. So assume that $H$ has a proper absolutely irreducible factor $h$. By the minimality of $k$, we have $\deg_{Y_4}h=4$. If $h$ is not pseudo-cyclic, then $hh^\rho\mid H$, whence $G=\epsilon f\sigma(f)hh^\rho$ for some $\epsilon\in\overline \f_p^*$, which is impossible by $5^\circ$. Therefore $h$ is pseudo-cyclic and hence cyclic (by $4^\circ$). If $\sigma(h)/h$ is not a constant, we have $h\sigma(h)\mid H$, which leads to the same contradiction. Thus $\sigma(h)/h$ is a constant. We may assume that $\sigma(h)=h$. Now $h$ is a cyclic absolutely irreducible factor of $G$ in $\f_p[Y_1,Y_2,Y_3,Y_4]$.  

\medskip
{\bf Case 3.2.} Assume that $f$ is not cyclic. By $4^\circ$, $f$ is not pseudo-cyclic. Then $ff^\rho$ is a cyclic factor of $G$. We claim that $\sigma(ff^\rho)/ff^\rho$ is a constant. (Otherwise, $f$, $f^\rho$, $\sigma(f)$ and $\sigma(f^\rho)$ are different factors of $G$. Then $G=e ff^\rho\sigma(f)\sigma(f^\rho)$ for some $e\in\overline\f_p^*$, which is impossible by $5^\circ$.)  We may assume that $ff^\rho\in\f_p[Y_1,Y_2,Y_3,Y_4]$. Let $H=G/ff^\rho$, which is cyclic and belongs to $\f_p[Y_1,Y_2,Y_3,Y_4]$. By $5^\circ$, $H$ is not a constant. Let $h$ be an absolutely irreducible factor of $H$. By the minimality of $k$, $\deg_{Y_4}h\ge 4$. If $h$ is not cyclic, then $hh^\rho\mid H$. Thus $G=\epsilon ff^\rho hh^\rho$ for some $\epsilon\in\overline\f_p^*$, which is impossible by $5^\circ$. So $h$ is cyclic. If $\sigma(h)/h$ is not a constant, then $h\sigma(h)\mid H$, which leads to the same contradiction. So $\sigma(h)/h$ is a constant, and we may assume that $\sigma(h)=h$. Now $h$ is a cyclic absolutely irreducible factor of $G$ in $\f_p[Y_1,Y_2,Y_3,Y_4]$.

\medskip
The proof of the lemma is now complete.
\end{proof}

\section*{Acknowledgment}

The research of D. Bartoli was supported by the Italian National Group for Algebraic and Geometric Structures and their Applications (GNSAGA - INdAM).



\end{document}